\documentclass[11pt,psfig]{article}

\usepackage{amssymb}
\usepackage{amsmath,amsthm}
\DeclareMathOperator{\vect}{vec}
\usepackage{algorithm}
\usepackage[noend]{algpseudocode}
\usepackage{epsfig,amsthm,amsmath}
\usepackage{verbatim}
\usepackage{algpseudocode}

\algblockdefx[Loop]{Loop}{EndLoop}[1][]{\textbf{Loop} #1}{\textbf{End Loop}}

\usepackage{amsfonts}
\usepackage{graphicx}
\usepackage{graphics}
\usepackage[margin=1.15in]{geometry}
\newtheorem{Theorem}{Theorem}[section]

\newtheorem{lemma}[Theorem]{Lemma}
\newtheorem{example}[Theorem]{Example}
\newtheorem{remark}[Theorem]{Remark}

\newtheorem{theorem}{{\bf Theorem  }}[section]
\newtheorem{prop}{ {\bf Proposition}}[section]

\newtheorem{definition}{{\bf Definition}}[section]

\let\oldproofname=\proofname
\renewcommand{\proofname}{\rm\bf{\oldproofname}}

\DeclareMathOperator{\Mag}{mag}   

\begin{document}

\title{Efficient Approaches for Enclosing the United Solution Set of the Interval Generalized Sylvester Matrix Equations}

\author{Marzieh Dehghani-Madiseh\footnote{
Department of Mathematics, Faculty of Mathematical Sciences and Computer,
Shahid Chamran University of Ahvaz, Ahvaz, Iran, 
e-mail: \texttt{marzieh.dehghani66@gmail.com}
} \and Milan Hlad\'{\i}k\footnote{
Charles University, Faculty  of  Mathematics  and  Physics,
Department of Applied Mathematics, 
Malostransk\'e n\'am.~25, 11800, Prague, Czech Republic, 
e-mail: \texttt{milan.hladik@matfyz.cz}
}}

\maketitle

\begin{abstract}
In this work, we investigate the interval generalized Sylvester matrix equation ${\bf{A}}X{\bf{B}}+{\bf{C}}X{\bf{D}}={\bf{F}}$ and develop some techniques for obtaining outer estimations for the so-called united solution set of this interval system. First, we propose a modified variant of the Krawczyk operator which causes reducing computational complexity to cubic, compared to  Kronecker product form. We then propose an iterative technique for enclosing the solution set. These approaches are based on spectral decompositions of the midpoints of ${\bf{A}}$, ${\bf{B}}$, ${\bf{C}}$ and ${\bf{D}}$ and in both of them we suppose that the midpoints of ${\bf{A}}$ and ${\bf{C}}$ are simultaneously diagonalizable as well as for the midpoints of the matrices ${\bf{B}}$ and ${\bf{D}}$. Some numerical experiments are given to illustrate the performance of the proposed methods.
\end{abstract}

\begin{quote}\textbf{Keywords:} 
Interval arithmetic; Generalized Sylvester matrix equation; Krawczyk operator; Preconditioning.\\
MSC codes: 65G30, 15A24
\end{quote}

\section{Introduction}\label{section 1}
Consider the implicit differential equation
\begin{equation}\label{eq.1.1}
  g(\dot{x},x)=0.
\end{equation}
As said in \cite{Ept1980}, for obtaining a numerical solution of (\ref{eq.1.1}) using the block multistep methods suppose one has available quantities $(\dot{x}_{j,n-p},x_{j,n-p})$ that approximate $\dot{x}(t)$ and $x(t)$ at past times $t_{j,n-p}=t_{n-p-1}+hc_{j}$, where $h$ is the time step, $j=1,\ldots,v$ and $p=1,\ldots,k$ (usually the numbers $c_{j}$ satisfy $0\leq c_{j}\leq 1$). To advance the method from time $t_{n-1}$ to time $t_{n}$, the quantities $x_{j,n},\dot{x}_{j,n}$ at stage $l$ of the iterations must be satisfied in the following conditions
\begin{equation}\label{eq.1.2}
  g(\dot{x}^{(l)}_{j,n},x^{(l)}_{j,n})+[\partial g/\partial \dot{x}]\delta\dot{x}_{j,n}^{(l)}+[\partial g/\partial x]\delta x_{j,n}^{(l)}=0, \hspace{.5 cm} j=1,\ldots,v,
\end{equation}
\begin{equation}\label{eq.1.3}
  r_{i}^{(l)}+\sum_{j=1}^{v}(\alpha_{ij}^{(0)} \delta x_{j,n}^{(l)}-h\beta_{ij}^{(0)}\delta\dot{x}_{j,n}^{(l)})=0,
\end{equation}
for specified parameters $\alpha_{ij},\beta_{ij},r_{i},\delta$, see \cite{Ept1980}. But solution of the pairs (\ref{eq.1.2}) and (\ref{eq.1.3}) may be obtained by solving a pair of generalized Sylvester matrix equations as follows
\begin{displaymath}
 \left\{ \begin{array}{ll}
A\delta X^{(l)}D -B\delta X^{(l)}C=-AR^{(l)}+G^{(l)}C, \\
A\delta \dot{X}^{(l)}D -B\delta \dot{X}^{(l)}C=(1/h)BR^{(l)}-G^{(l)}D.
\end{array} \right.
\end{displaymath}

This motivates us to consider the generalized Sylvester matrix equation
\begin{equation}\label{eq.1.4}
  AXB+CXD=F,
\end{equation}
where $A,C$ are square known matrices of order $m$, $B,D$ are square known matrices of order $n$, and the right-hand side matrix $F$ and the unknown matrix $X$ are $m$-by-$n$ matrices. Each of the introduced matrices can be real or complex. This equation has nice applications in various branches of science. Equations in the form (\ref{eq.1.4}) appear in the study of perturbations of the generalized eigenvalue problem \cite{Chu1987}, in MINQUE theory of estimating covariance components in a covariance model \cite{Rao1972}, in stability problems for descriptor systems \cite{Ben2012} and in the numerical solution of implicit ordinary differential equations \cite{Ept1980,HerGas1989}. Also equation (\ref{eq.1.4}) includes two important linear problems in the space of matrices, namely Lyapunov and Sylvester matrix equations that have vital roles in many areas of mathematics and engineering such as in control theory, stability and robust stability, image processing, model reduction and many other applications, see \cite{FroHas2012} and the references therein. Some methods for solving the matrix equation (\ref{eq.1.4}) can be found in \cite{HerGas1989,Chu1987b,DehHaj2010}.

Though the matrix equations of the form (\ref{eq.1.4}) are studied in the literature, less or even no attention has been paid to the form of uncertainties that may occur in the elements of $A,B,C,D$ and $F$. In fact, in practice the elements of input data are obtained from the experience and so due to the measurement errors they will be accompanied by uncertainty. It is natural to describe these uncertainties by intervals and hence we will have the interval generalized Sylvester matrix equation
\begin{equation}\label{eq.1.5}
  {\bf{A}}X{\bf{B}}+{\bf{C}}X{\bf{D}}={\bf{F}},
\end{equation}
where ${\bf{A}}$, ${\bf{B}}$, ${\bf{C}}$, ${\bf{D}}$ and ${\bf{F}}$ are interval matrices (boldface letters stand for interval quantities). Interval computations can be used in various areas such as set inversion, motion planning, robotics, traffic control, electronic engineering, economics \cite{Kea1996b,KeaKre1996}.

In general, computing the exact solution of an interval linear system is NP-hard \cite{RohKre1995}, and so many researchers are interested in finding some approximations for the solution set. Up to now, only a few techniques for approximating the solution set of interval matrix equations have been proposed. Different techniques for enclosing the united solution set of the interval Sylvester matrix equation ${\bf{A}}X+X{\bf{B}}={\bf{C}}$ have been examined by Seif et al. \cite{SeiHus1994}. Shashikhin \cite{Shas2002,Shas2002b} used the correspondence between the interval Sylvester matrix equation  ${\bf{A}}X+X{\bf{B}}={\bf{C}}$ and an interval linear system of the following form to find an interval enclosure for the united solution set
\begin{equation*}
  ((I_{n}\otimes{\bf{A}})+({\bf{B}}^{\top}\otimes I_{m}))x={\bf{c}}, \hspace{.5 cm} x={\vect}(X),  \hspace{.5 cm} {\bf{c}}={\vect}({\bf{C}}),
\end{equation*}
in which $\otimes$ denotes the Kronecker product and ${\vect}({\bf{F}})$ is an $mn$-dimensional vector obtained by stacking the columns of matrix ${\bf{F}}$, i.e., 
$${\vect}({\bf{F}})=({\bf{F}}_{11},\ldots,{\bf{F}}_{m1},\ldots,{\bf{F}}_{1n},\ldots, {\bf{F}}_{mn})^{\top}.$$
Hashemi and Dehghan \cite{HasDeh2011} used an interval Gaussian elimination method for enclosing the united solution set of ${\bf{A}}X={\bf{B}}$ and also in \cite{HasDeh2012} proposed a modification of Krawczyk operator with a significant reduction in computational complexity of obtaining an outer estimation of the united solution set to the interval Lyapunov matrix equation ${\bf{A}}X+X{\bf{A}}^{\top}={\bf{F}}$. Dehghani-Madiseh and Dehghan in \cite{DehDeh2014} developed some algebraic and numerical techniques for obtaining inner and outer estimations for the generalized solution sets of the following interval equation
\begin{equation*}
\sum_{i=1}^{p}{\bf{A}}_{i}X_{i}+\sum_{j=1}^{q}{Y_{j}{\bf{B}}_{j}}={\bf{C}}.
\end{equation*}
Rivaz et al. \cite{RivMog2014} considered the system of interval matrix equations
\begin{displaymath}
 \left\{ \begin{array}{ll}
{\bf{A}}_{11}X+Y{\bf{A}}_{12}={\bf{C}}_{1}, \\
{\bf{A}}_{21}X+Y{\bf{A}}_{22}={\bf{C}}_{2},
\end{array} \right.
\end{displaymath}
and presented direct and iterative approaches for enclosing its united solution set.

The interval generalized Sylvester matrix equation (\ref{eq.1.5}) can be transformed to the interval linear system
\begin{equation}\label{eq.1.6}
  {\bf{Q}}x={\bf{f}},
\end{equation}
in which ${\bf{Q}}={\bf{B}}^{\top}\otimes {\bf{A}}+{\bf{D}}^{\top}\otimes{\bf{C}}$, $x=\textrm{vec}(X)$ and ${\bf{f}}=\textrm{vec}({\bf{F}})$. It is to be noted that the interval matrix ${\bf{Q}}$ has a special structure. In fact, its elements have non-linear dependencies and the interval linear system (\ref{eq.1.6}) is a parametric interval linear system. For more details about parametric systems see, e.g., \cite{Hla2012d,PopKra2007}. But the common approach when considering the transformed system of equations (\ref{eq.1.6}) is to treat with it as a non-parametric interval linear system, i.e., the elements of the coefficient matrix ${\bf{Q}}$ and the right-hand side interval vector ${\bf{f}}$ are supposed to vary independently. Based on this choice, the most commonly used approach for computing enclosure for the solution set of an interval matrix equation of type (\ref{eq.1.5}) is to firstly transform it into an interval linear system of the form (\ref{eq.1.6}) and then using a technique for enclosing the solution set of that interval linear system. This is exactly the idea that was proposed by Rohn \cite{versoft} in the \verb"VERMATREQN.m" code of the \verb"VERSOFT" software. But this approach has a computational complexity of $O(m^{3}n^{3})$ which is very high even for small sizes of interval system (\ref{eq.1.5}). In this work, we want to present some approaches that reduce the cost to $O(m^{3}+n^{3})$.

\subsubsection*{Notation.}
In this work, boldface letters denote interval quantities and ordinary letters stand for real quantities. Notations $\mathbb{R}$ and $\mathbb{C}$,  respectively stand for the field of real and complex numbers. We use $\mathbb{K}$ to denote either of the fields $\mathbb{R}$ or $\mathbb{C}$. In the case of $\mathbb{K}=\mathbb{R}$ ($\mathbb{K}=\mathbb{C}$), $\mathbb{IK}$ denotes the space of real (complex) intervals. Further, $\mathbb{IK}^{n}$ and $\mathbb{IK}^{m\times n}$ stand for the set of all $n$-dimensional interval vectors and the set of all $m$-by-$n$ interval matrices over field $\mathbb{K}$, respectively. For an interval quantity ${\bf{x}}\in\mathbb{IK}$, we use $x^{c}$ or mid(${\bf{x}}$) for denoting its midpoint, and $x^{\Delta}$ or rad(${\bf{x}}$)  stand for its radius. The magnitude of ${\bf{x}}$ is defined as $\Mag({\bf{x}})\equiv |{\bf{x}}|=\max\{|x|:x\in{\bf{x}}\}=|x^{c}|+x^{\Delta}$. For a square matrix $A=(A_{ij})\in\mathbb{K}^{m\times m}$, diag($A$)=$(A_{11},\ldots,A_{mm})^{\top}$ denotes its diagonal and for a vector $a\in\mathbb{K}^{m}$, Diag($a$)=Diag$(a_{1},\ldots,a_{m})$ is the diagonal matrix with diagonal entries $a_{i}$, $i=1,\ldots,m$. The diagonal part of matrix $A$ is Diag(diag($A$)).

Two most frequently used representations for intervals over $\mathbb{K}\in\{\mathbb{R},\mathbb{C}\}$ are as follow:

{\textbf{(i)}} The infimum-supremum representation
\begin{equation}\label{eq.1.7}
  [\underline{x},\overline{x}]=\{x\in\mathbb{K}:\underline{x}\leq x\leq \overline{x}\}, \hspace{.5 cm} {\textrm{for some}} \hspace{.2 cm} \underline{x},\overline{x}\in\mathbb{K}, \hspace{.3 cm} \underline{x}\leq\overline{x},
\end{equation}
where $\leq$ is the partial ordering $x\leq y \Leftrightarrow {\textrm{Re}}(x)\leq {\textrm{Re}}(y)$ \& ${\textrm{Im}}(x)\leq {\textrm{Im}}(y) $, for $x,y\in{\mathbb{C}}$, in which  ${\textrm{Re}}(x)$ and ${\textrm{Im}}(x)$, respectively stand for the real part and the imaginary part of $x$.

{\textbf{(ii)}} The midpoint-radius representation
\begin{equation}\label{eq.1.8}
\bigl<  x^{c},x^{\Delta} \bigr >=\{x\in\mathbb{K}: |x-x^{c}|\leq x^{\Delta} \}   \hspace{.5 cm} {\textrm{for some}} \hspace{.2 cm} x^{c}\in\mathbb{K}, \hspace{.3 cm} 0\leq x^{\Delta}\in\mathbb{R}.
\end{equation}
The two representations are identical for real intervals, whereas for complex intervals the first representation are rectangles, the second one represents disks in the complex plain, see \cite{Rum1999b}. The midpoint-radius representation $\bigl<  x^{c},x^{\Delta} \bigr >$ for interval number ${\bf{x}}$, also is denoted by midrad($x^{c},x^{\Delta}$).

There are different definitions for the basic arithmetic operations $\circ \in\{+,-,*,/\}$ over $\mathbb{IK}$ dependent on referring by $\mathbb{IK}$ to intervals in infimum-supremum representation (\ref{eq.1.7}) or midpoint-radius representation (\ref{eq.1.8}). But if we consider the last case (complex circular arithmetic) then interval operations satisfy the following fundamental property of inclusion isotonicity
\begin{equation*}
\{x\circ y: x\in{\bf{x}}, y\in{\bf{y}} \} \subseteq {\bf{x}}\circ {\bf{y}}.
\end{equation*}
As said in \cite{FroHas2012}, for reasons of computational efficiency, the interval package Intlab \cite{Rum1999} uses the restriction of complex circular arithmetic to the real axis as its default arithmetic for real intervals. This results in a different multiplication and division as in standard real interval arithmetic. For the purposes of this work, we do not depend on the particular interval arithmetic in use. All we need is the enclosure property to hold, and this is true for standard real arithmetic, complex circular arithmetic as well as Intlab's default real arithmetic.

The paper is organized as follows: In Section~\ref{section 2}, we have our main results. First, we define the concept of united solution set for interval system (\ref{eq.1.5}) and then give a sufficient condition under which this solution set is bounded. We use a preconditioning technique and then present a modification of the Krawczyk operator applied on the preconditioned system. Also a block diagonalization approach for the ill-conditioned cases will be proposed. We then present an iterative technique for enclosing the solution set of the interval system (\ref{eq.1.5}). Section~\ref{section 3} contains some numerical tests and comparisons. Finally, the paper is completed by a short conclusion in Section~\ref{section 4}.

\section{Some efficient approaches for enclosing the united solution set}\label{section 2}

\subsection{The united solution set}
Equation (\ref{eq.1.5}) is interpreted as a collection of all generalized Sylvester matrix equations $AXB+CXD=F$, where the coefficients vary in intervals, i.e., $A\in{\bf{A}}$, $B\in{\bf{B}}$, $C\in{\bf{C}}$, $D\in{\bf{D}}$ and $F\in{\bf{F}}$. For an interval linear system, different types of solution sets can be considered. The concept of generalized solution sets for a system of interval linear equations was introduced for the first time by Shary \cite{Sha1999}. But the united solution set is the widest in the collection of all generalized solution sets to an interval linear system. Also the united solution set has numerous applications in verified numerical computations based on the interval analysis. So it is natural that researchers pay special attention to this type of solution set. We define the united solution set to the interval generalized Sylvester matrix equation (\ref{eq.1.5}) as follows.
\begin{definition}\label{def.2.1}
The united solution set to the interval generalized Sylvester matrix equation (\ref{eq.1.5}) is the set
\begin{equation}\label{eq.2.1}
\Xi=\{X\in\mathbb{K}^{m\times n}: (\exists A\in{\bf{A}})(\exists B\in{\bf{B}})(\exists C\in{\bf{C}})(\exists D\in{\bf{D}})(\exists F\in{\bf{F}})(AXB+CXD=F)\}.
\end{equation}
\end{definition}

It is hard to characterize $\Xi$ by simple means (as is common for parametric interval system in general; cf. \cite{Hla2012d}). However, we have the following necessary conditions.

\begin{prop}\label{proposition 2.1}
If $X\in\Xi$, then
\begin{equation}\label{aeq 1}
|A^{c}XB^{c}+C^{c}XD^{c}-F^{c}|\leq \Mag({\bf{A}})|X|B^{\Delta}+  A^{\Delta}|X B^{c}|  +\Mag({\bf{C}})|X|D^{\Delta}+  C^{\Delta}|X D^{c}|+ F^{\Delta}.
\end{equation}
\end{prop}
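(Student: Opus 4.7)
The plan is to unpack the definition of $\Xi$, write each interval matrix as its midpoint plus a perturbation bounded by the radius, expand the product $AXB+CXD=F$, and then collect terms so that the midpoint-only piece is isolated on the left-hand side.

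More concretely, I would start with $X\in\Xi$, which by Definition~\ref{def.2.1} yields $A\in{\bf A}$, $B\in{\bf B}$, $C\in{\bf C}$, $D\in{\bf D}$, $F\in{\bf F}$ satisfying $AXB+CXD=F$. Writing $A=A^c+\Delta A$, $B=B^c+\Delta B$, and similarly for $C,D,F$, with $|\Delta A|\leq A^\Delta$, $|\Delta B|\leq B^\Delta$ and so on, I would expand the bilinear products
\begin{align*}
AXB &= A^c X B^c + A^c X\Delta B + \Delta A\, X B^c + \Delta A\, X\Delta B,\\
CXD &= C^c X D^c + C^c X\Delta D + \Delta C\, X D^c + \Delta C\, X\Delta D.
\end{align*}
Substituting $F=F^c+\Delta F$ and moving the midpoint terms to one side gives
\begin{equation*}
A^c X B^c + C^c X D^c - F^c = \Delta F - A^c X\Delta B - \Delta A\,XB^c - \Delta A\,X\Delta B - C^c X\Delta D - \Delta C\,XD^c - \Delta C\,X\Delta D.
\end{equation*}

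Next I would take entrywise absolute values and apply the triangle inequality together with the elementary bound $|PQ|\leq |P||Q|$. The key bookkeeping step is to group, for each pair, the "midpoint times perturbation" term with the "perturbation times perturbation" term so that
\begin{equation*}
|A^c X\Delta B| + |\Delta A\, X\Delta B| \leq (|A^c|+A^\Delta)|X|B^\Delta = \Mag({\bf A})\,|X|\,B^\Delta,
\end{equation*}
and similarly for the $(C,D)$ pair. The remaining terms $|\Delta A\,X B^c|$ and $|\Delta C\,X D^c|$ are kept in the tighter form $A^\Delta |XB^c|$ and $C^\Delta |XD^c|$ rather than loosening $|XB^c|$ to $|X||B^c|$; this explains the asymmetric appearance of the right-hand side. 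Adding $|\Delta F|\leq F^\Delta$ yields exactly inequality \eqref{aeq 1}.

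There is no real obstacle here beyond the careful pairing in the triangle-inequality step: the proof is essentially a book-keeping exercise, and the only subtlety is preserving the factors $|XB^c|$ and $|XD^c|$ intact to match the stated bound instead of immediately splitting them into $|X||B^c|$ and $|X||D^c|$.
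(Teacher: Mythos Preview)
Your proposal is correct and follows essentially the same idea as the paper's proof. The only cosmetic difference is that the paper uses the add-and-subtract trick $AXB-A^{c}XB^{c}=AX(B-B^{c})+(A-A^{c})XB^{c}$ to obtain two terms and then bounds $|AX(B-B^{c})|\leq |A||X|B^{\Delta}\leq \Mag({\bf A})|X|B^{\Delta}$ directly, whereas you fully expand into three terms and then regroup $|A^{c}X\Delta B|+|\Delta A\,X\Delta B|$ to recover the same $\Mag({\bf A})|X|B^{\Delta}$; the resulting inequality and the handling of the $A^{\Delta}|XB^{c}|$ term are identical.
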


\begin{proof}
Let $X\in\Xi$, then there exist $A\in{\bf{A}}$, $B\in{\bf{B}}$, $C\in{\bf{C}}$, $D\in{\bf{D}}$ and $F\in{\bf{F}}$ such that $AXB+CXD=F$, which implies
{\setlength\arraycolsep{2pt}
\begin{eqnarray}\label{equ 1}
 |A^{c}XB^{c}+C^{c}XD^{c}-F^{c}| &&=|(AXB-A^{c}XB^{c})+(CXD-C^{c}XD^{c})-(F-F^{c})|
\nonumber\\
&& \leq |AXB-A^{c}XB^{c}|+|CXD-C^{c}XD^{c}|+|F-F^{c}|.
\end{eqnarray}}
On the other hand, we have
{\setlength\arraycolsep{2pt}
\begin{eqnarray}\label{equ 2}
 |AXB-A^{c}XB^{c}| && =|AXB-AXB^{c}+AXB^{c}-A^{c}XB^{c}|
 \nonumber\\
&& \leq |AXB-AXB^{c}|+|AXB^{c}-A^{c}XB^{c}|
\nonumber\\
&& \leq |AX||B-B^{c}|+|A-A^{c}||XB^{c}| \leq |AX|B^{\Delta}+A^{\Delta}|XB^{c}|
\nonumber\\
&& \leq |A||X|B^{\Delta}+A^{\Delta}|XB^{c}|\leq \Mag({\bf{A}})|X|B^{\Delta}+  A^{\Delta}|X B^{c}|,
\end{eqnarray}}
and similarly
\begin{equation}\label{equ 3}
|CXD-C^{c}XD^{c}|\leq \Mag({\bf{C}})|X|D^{\Delta}+  C^{\Delta}|X D^{c}|.
\end{equation}
Using (\ref{equ 1}), (\ref{equ 2}), (\ref{equ 3}) and $|F-F^{c}|\leq F^{\Delta}$, we obtain (\ref{aeq 1}).
\end{proof}

\begin{remark}\label{remark 1}
Due to the symmetry in the construction of inequality (\ref{aeq 1}), we conclude that if $X$ belongs to the solution set of the interval generalized Sylvester matrix equation (\ref{eq.1.5}) then $X$ will be satisfied also in each of the following inequalities
\begin{align*}
 |A^{c}XB^{c}+C^{c}XD^{c}-F^{c}|&\leq |A^{c}X|B^{\Delta}+A^{\Delta}|X| \Mag({\bf{B}}) +\Mag({\bf{C}})|X|D^{\Delta}+  C^{\Delta}|X D^{c}| +F^{\Delta},
 \\
 |A^{c}XB^{c}+C^{c}XD^{c}-F^{c}|&\leq \Mag({\bf{A}})|X|B^{\Delta}+  A^{\Delta}|X B^{c}| + |C^{c}X|D^{\Delta}+C^{\Delta}|X| \Mag({\bf{D}})+F^{\Delta} ,
\\
 |A^{c}XB^{c}+C^{c}XD^{c}-F^{c}|&\leq |A^{c}X|B^{\Delta}+A^{\Delta}|X| \Mag({\bf{B}}) +|C^{c}X|D^{\Delta}+C^{\Delta}|X| \Mag({\bf{D}})+F^{\Delta} ,
\end{align*}
\end{remark}
In this work, we want to obtain an interval matrix that encloses the solution set $\Xi$ defined by (\ref{eq.2.1}). It is an outer estimation problem. But this enclosure is achievable if $\Xi$ is a bounded set. In Theorem \ref{theorem 2.1} below, we give a sufficient condition under which the solution set (\ref{eq.2.1}) is bounded.
\begin{theorem}\label{theorem 2.1}
For any $m$-by-$n$ interval matrix ${\bf{F}}$, the solution set $\Xi$ to the interval generalized Sylvester matrix equation (\ref{eq.1.5}) is bounded if the inequality
\begin{equation}\label{eq.2.4}
|A^{c}XB^{c}+C^{c}XD^{c}|\leq \Mag({\bf{A}})|X|B^{\Delta}+  A^{\Delta}|X B^{c}|  +\Mag({\bf{C}})|X|D^{\Delta}+  C^{\Delta}|X D^{c}|
\end{equation}
has only the trivial solution $X=0\in\mathbb{R}^{m\times n}$.
\end{theorem}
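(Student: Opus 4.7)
My plan is a proof by contrapositive, combined with a standard rescaling/limit argument: assume $\Xi$ is unbounded and extract from it a nonzero solution of the homogeneous inequality~(\ref{eq.2.4}), contradicting the hypothesis.

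First I would suppose for contradiction that $\Xi$ is unbounded, and pick a sequence $\{X_k\}\subset \Xi$ with $t_k:=\|X_k\|\to \infty$ in some fixed matrix norm (say Frobenius). By Proposition~\ref{proposition 2.1}, each $X_k$ satisfies the inhomogeneous inequality (\ref{aeq 1}). The crucial structural observation is that every term in (\ref{aeq 1}) other than those involving $F^c$ and $F^{\Delta}$ is positively homogeneous of degree one in $X$: this is immediate since $|\cdot|$, $\Mag(\cdot)$, and matrix multiplication on the right/left by a fixed matrix all commute with multiplication by a positive scalar.

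Next, I would normalize by setting $Y_k := X_k/t_k$, so $\|Y_k\|=1$, and divide (\ref{aeq 1}) by $t_k$ to obtain
\[
|A^{c}Y_k B^{c}+C^{c}Y_k D^{c}-F^{c}/t_k|\leq \Mag({\bf{A}})|Y_k|B^{\Delta}+A^{\Delta}|Y_k B^{c}|+\Mag({\bf{C}})|Y_k|D^{\Delta}+C^{\Delta}|Y_k D^{c}|+F^{\Delta}/t_k.
\]
Because $\{Y_k\}$ lies on the unit sphere, which is compact in $\mathbb{K}^{m\times n}$, Bolzano-Weierstrass yields a subsequence $Y_{k_j}\to Y^{*}$ with $\|Y^{*}\|=1$, so in particular $Y^{*}\neq 0$. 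Since $t_{k_j}\to\infty$, the terms $F^{c}/t_{k_j}$ and $F^{\Delta}/t_{k_j}$ disappear in the limit, and continuity of $|\cdot|$, $\Mag(\cdot)$, and matrix multiplication lets me pass to the limit on both sides. The outcome is precisely (\ref{eq.2.4}) applied to the nonzero matrix $Y^{*}$, contradicting the hypothesis.

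The main obstacle I anticipate is being scrupulous at the rescaling step: I need to confirm that every functional of $X$ appearing in (\ref{aeq 1}) — the outer absolute value, the entrywise quantities $|X|$, $|XB^{c}|$, $|XD^{c}|$, and the magnitude term — is both positively homogeneous and continuous in $X$. These are routine once the definitions are unpacked entrywise, but they are what makes the scaling argument go through. Everything else (producing an unbounded sequence, extracting a convergent subsequence on the unit sphere, discarding the $F$-contributions) is mechanical once that homogeneity is in hand.
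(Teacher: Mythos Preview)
Your argument is correct, but it follows a genuinely different route from the paper's. The paper applies Proposition~\ref{proposition 2.1} to the \emph{homogeneous} system ${\bf A}X{\bf B}+{\bf C}X{\bf D}=0$ (i.e.\ with $F^{c}=0$, $F^{\Delta}=0$), which reduces (\ref{aeq 1}) directly to (\ref{eq.2.4}); the hypothesis then forces that solution set to be $\{0\}$. Vectorizing, this means $B^{\top}\!\otimes A+D^{\top}\!\otimes C$ is nonsingular for every selection $A\in{\bf A},\ldots,D\in{\bf D}$, and boundedness of $\Xi$ for arbitrary ${\bf F}$ then follows from continuity of the inverse over the compact parameter box. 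By contrast, you never leave the inhomogeneous inequality (\ref{aeq 1}): you rescale an unbounded sequence in $\Xi$, pass to a limit on the unit sphere, and produce a nonzero solution of (\ref{eq.2.4}) directly. Your approach is more self-contained---no Kronecker products, and the compactness step is explicit---while the paper's route has the advantage of delivering, as a by-product, the nonsingularity (hence unique solvability) of every point instance, which is useful later in the paper.
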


\begin{proof}
Using Proposition \ref{proposition 2.1}, we know that if $X$ belongs to the solution set of the interval generalized Sylvester matrix equation ${\bf{A}}X{\bf{B}}+{\bf{C}}X{\bf{D}}=0$, then $X$ satisfies in the inequality (\ref{eq.2.4}). According to the assumption of the theorem, inequality (\ref{eq.2.4}) has only the trivial solution $X=0\in\mathbb{R}^{m\times n}$. So the solution set of the interval system ${\bf{A}}X{\bf{B}}+{\bf{C}}X{\bf{D}}=0$ is $\{0\}$ and this means that for all $A\in{\bf{A}}$, $B\in{\bf{B}}$, $C\in{\bf{C}}$ and $D\in{\bf{D}}$, equation $AXB+CXD=0$ has only the trivial solution $X=0\in\mathbb{R}^{m\times n}$. Thus for all $A\in{\bf{A}}$, $B\in{\bf{B}}$, $C\in{\bf{C}}$ and $D\in{\bf{D}}$, its equivalent form $(B^{\top}\otimes A+D^{\top}\otimes C){\textrm{vec}}(X)=0$ has only the trivial solution ${\textrm{vec}}(X)=0\in\mathbb{R}^{mn}$ that yields for all $A\in{\bf{A}}$, $B\in{\bf{B}}$, $C\in{\bf{C}}$ and $D\in{\bf{D}}$, the matrix $B^{\top}\otimes A+D^{\top}\otimes C$ is nonsingular. Hence for any $m$-by-$n$ interval matrix ${\bf{F}}$ the set
\begin{equation*}
\{{\textrm{vec}}(X): (B^{\top}\otimes A+D^{\top}\otimes C){\textrm{vec}}(X)={\textrm{vec}}(F), A\in{\bf{A}}, B\in{\bf{B}},C\in{\bf{C}},D\in{\bf{D}},F\in{\bf{F}} \}
\end{equation*}
is bounded. Bringing the above set back to its equivalent form, i.e., the set
\begin{equation*}
\{X:AXB+CXD=F , A\in{\bf{A}}, B\in{\bf{B}},C\in{\bf{C}},D\in{\bf{D}},F\in{\bf{F}} \},
\end{equation*}
implies that the solution set of the interval generalized Sylvester matrix equation ${\bf{A}}X{\bf{B}}+{\bf{C}}X{\bf{D}}={\bf{F}}$ is bounded for any desirable interval matrix ${\bf{F}}\in\mathbb{IK}^{m\times n}$.
\end{proof}

\begin{remark}\label{remark 2}
Due to Remark \ref{remark 1}, we can say that if any of the inequalities
{\setlength\arraycolsep{2pt}
\begin{eqnarray*}
&& |A^{c}XB^{c}+C^{c}XD^{c}|\leq |A^{c}X|B^{\Delta}+A^{\Delta}|X|\Mag({\bf{B}}) +\Mag({\bf{C}})|X|D^{\Delta}+  C^{\Delta}|X D^{c}|,
 \nonumber\\
&& |A^{c}XB^{c}+C^{c}XD^{c}|\leq \Mag({\bf{A}})|X|B^{\Delta}+  A^{\Delta}|X B^{c}| + |C^{c}X|D^{\Delta}+C^{\Delta}|X| \Mag({\bf{D}}),
\nonumber\\
&& |A^{c}XB^{c}+C^{c}XD^{c}|\leq |A^{c}X|B^{\Delta}+A^{\Delta}|X| \Mag({\bf{B}}) +|C^{c}X|D^{\Delta}+C^{\Delta}|X|\Mag({\bf{D}}) ,
\end{eqnarray*}}
has only the trivial solution $X=0\in\mathbb{R}^{m\times n}$ then for any $m$-by-$n$ interval matrix ${\bf{F}}$, the solution set $\Xi$ to the interval generalized Sylvester matrix equation (\ref{eq.1.5}) will be bounded.
\end{remark}

Now, we describe the midpoint and radius of the coefficient matrix ${\bf{Q}}$ in equation (\ref{eq.1.6}) using the interval operations over representation (\ref{eq.1.8}). This description will be used for theoretical considerations in the sequel. Interval matrices ${\bf{A}}$, ${\bf{B}}$, ${\bf{C}}$ and ${\bf{D}}$ based on the midpoint-radius representation (\ref{eq.1.8}) have the following form
\begin{equation}\label{eq.2.8}
{\bf{A}}=\big< A^{c},A^{\Delta} \big>, \hspace{.4 cm} {\bf{B}}=\big< B^{c},B^{\Delta} \big>, \hspace{.4 cm} {\bf{C}}=\big< C^{c},C^{\Delta} \big>, \hspace{.4 cm} {\bf{D}}=\big< D^{c},D^{\Delta} \big>.
\end{equation}
Now, for determining ${\bf{Q}}={\bf{B}}^{\top}\otimes{\bf{A}}+{\bf{D}}^{\top}\otimes{\bf{C}}$, first we determine ${\bf{B}}^{\top}\otimes{\bf{A}}$. In fact ${\bf{B}}^{\top}\otimes{\bf{A}}$ is an $mn$-by-$mn$ block matrix whose its $(s,t)$-th block is $m$-by-$m$ matrix ${\bf{B}}^{\top}_{st}{\bf{A}}$ for $s=1,\ldots,n$ and $t=1,\ldots,n$. We call this block ${\bf{T}}^{st}$. For $i=1,\ldots,m$ and $j=1,\ldots,m$, ${\bf{T}}^{st}_{ij}$ is
\begin{equation*}
{\bf{T}}^{st}_{ij}={\bf{B}}^{\top}_{st}{\bf{A}}_{ij}=\big< B_{st}^{\top c},B_{st}^{\top \Delta} \big> \big< A_{ij}^{ c},A_{ij}^{\Delta} \big> =\big< B_{st}^{\top c} A_{ij}^{ c} ,|B_{st}^{\top c}|A_{ij}^{\Delta}+ B_{st}^{\top \Delta}|A_{ij}^{c}|+ B_{st}^{\top \Delta} A_{ij}^{\Delta}  \big>.
\end{equation*}
So it is easy to see that
\begin{equation*}
{\bf{B}}^{\top}\otimes{\bf{A}}=\big< B^{\top c} \otimes A^{ c} ,|B^{\top c}|\otimes A^{\Delta}+ B^{\top \Delta}\otimes|A^{c}|+ B^{\top \Delta}\otimes A^{\Delta}  \big>.
\end{equation*}
In a similar way we have
\begin{equation*}
{\bf{D}}^{\top}\otimes{\bf{C}}=\big< D^{\top c}\otimes C^{ c} ,|D^{\top c}|\otimes C^{\Delta}+ D^{\top \Delta}\otimes|C^{c}|+ D^{\top \Delta}\otimes C^{\Delta}  \big>,
\end{equation*}
and so
\begin{equation}\label{eq.2.9}
{\bf{Q}}=\big< B^{\top c}\otimes A^{ c}+ D^{\top c} \otimes C^{ c} ,|B^{\top c}|\otimes A^{\Delta} + B^{\top \Delta}\otimes\Mag({\bf{A}})+   |D^{\top c}|\otimes C^{\Delta}+ D^{\top \Delta}\otimes\Mag({\bf{C}}) \big>.
\end{equation}

\begin{lemma}\label{lemma 2.1}
For any three point matrices $A$, $B$ and $C$ of compatible sizes, we have\\

(i) ${\vect}(ABC)=(C^{\top}\otimes A){\vect}(B)$,\\

(ii) $(Diag({\vect}(A)))^{-1}{\vect}(B)={\vect}(B./A)$, where $./$ denotes the Hadamard division.
\end{lemma}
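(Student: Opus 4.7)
For part~(i), the plan is to invoke the standard \emph{vec trick}, whose proof I would carry out by column decomposition. The key identity I would start from is $\vect(u v^\top)=v\otimes u$ for column vectors $u,v$ of matching heights, which follows immediately from stacking the columns $v_j u$. Then I would expand $B=\sum_{i,j} B_{ij}\, e_i e_j^\top$ in the standard basis, so that
\[
ABC=\sum_{i,j} B_{ij}\,(A e_i)(C^\top e_j)^\top .
\]
Applying $\vect$ and the rank-one identity above gives $\vect(ABC)=\sum_{i,j} B_{ij}\,(C^\top e_j)\otimes(A e_i)$. Using the mixed-product rule $(C^\top e_j)\otimes(A e_i)=(C^\top\otimes A)(e_j\otimes e_i)$ factors out $C^\top\otimes A$, and the remaining sum collapses to $\vect(B)$ since $\vect(e_i e_j^\top)=e_j\otimes e_i$. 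Alternatively, one can argue columnwise: the $k$-th block of $\vect(ABC)$ is the $k$-th column $AB(C_{\cdot k})=\sum_j C_{jk} A B_{\cdot j}$, which is exactly what the $k$-th block row of $(C^\top\otimes A)$ applied to $\vect(B)$ produces.

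For part~(ii), the argument is essentially by direct entrywise inspection. Since $\vect(A)$ stacks the columns of $A$, the matrix $\operatorname{Diag}(\vect(A))$ is diagonal with entries $A_{ij}$ arranged in the column-major order, and under the standing assumption that this is invertible (i.e.\ all $A_{ij}\neq 0$), its inverse is $\operatorname{Diag}(\vect(A))^{-1}=\operatorname{Diag}(\vect(1./A))$, where $1$ denotes the all-ones matrix of the same shape as $A$. Multiplying by $\vect(B)$ scales each entry of $\vect(B)$ by $1/A_{ij}$ in precisely the matching position, so the result is the column stacking of the matrix whose $(i,j)$ entry is $B_{ij}/A_{ij}$, which by definition is $\vect(B./A)$.

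Neither step is technically difficult; part~(i) is the well-known Roth/Kronecker column identity and part~(ii) is a bookkeeping statement about diagonal matrices. The only minor subtlety I would be careful about is ensuring the ordering convention for $\vect$ matches the one declared in the paper (columns stacked top-to-bottom), since a row-major convention would swap the factors in the Kronecker product. Given the notation fixed earlier, no further obstacle arises.
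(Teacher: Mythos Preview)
Your arguments for both parts are correct and carefully written. Note, however, that the paper does not actually prove this lemma: it simply records that part~(i) is the standard identity from Horn and Johnson's \emph{Topics in Matrix Analysis} and that part~(ii) is taken from Frommer and Hashemi (2009). So there is no ``paper's own proof'' to compare against; you have supplied a self-contained verification where the authors opted to cite. Your rank-one/mixed-product derivation of the vec--Kronecker identity and the entrywise check for the Hadamard-division formula are exactly the kind of elementary justifications one would find in the cited sources, and your remark about matching the column-stacking convention is well placed.
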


Part (i) is from \cite{HorJoh1991} and part (ii) is taken from \cite{FroHas2009}. Note that part (ii) holds also for interval matrices but due to sub-distributivity low in interval arithmetic part (i) generally does not hold for interval matrices. However, due to the enclosure property of interval arithmetic we have the following lemma. This lemma also has been expressed in \cite{FroHas2012}.

\begin{lemma}\label{lemma 2.2}
Let ${\bf{A}}$, ${\bf{B}}$ and ${\bf{C}}$ are interval matrices of compatible sizes. Then\\
\begin{equation*}
\{(C^{\top}\otimes A){\vect}(B): A\in{\bf{A}}, B\in{\bf{B}}, C\in{\bf{C}} \}\subseteq  \left\{ \begin{array}{ll}
 {\vect}(({\bf{A}}{\bf{B}}){\bf{C}}),\\
{\vect}({\bf{A}}({\bf{B}}{\bf{C}})).
\end{array} \right.
\end{equation*}
\end{lemma}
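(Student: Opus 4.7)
The plan is to unpack the left-hand side element by element. Fix arbitrary $A\in{\bf{A}}$, $B\in{\bf{B}}$, $C\in{\bf{C}}$, and let $v=(C^{\top}\otimes A)\vect(B)$. Because $A$, $B$, $C$ are ordinary point matrices, part~(i) of Lemma~\ref{lemma 2.1} applies verbatim and yields $v=\vect(ABC)$. The whole task thus reduces to showing that $\vect(ABC)$ lies in each of the two right-hand sides.

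I would do this by invoking inclusion isotonicity of interval matrix multiplication, which, as emphasized in the discussion preceding the lemma, holds for every interval arithmetic considered in the paper. Parenthesizing the point product as $(AB)C$, inclusion isotonicity first gives $AB\in{\bf{A}}{\bf{B}}$ and then $(AB)C\in({\bf{A}}{\bf{B}}){\bf{C}}$, so $\vect(ABC)=\vect((AB)C)\in\vect(({\bf{A}}{\bf{B}}){\bf{C}})$. Reparenthesizing as $A(BC)$ gives, by the same argument, $\vect(ABC)\in\vect({\bf{A}}({\bf{B}}{\bf{C}}))$. Since $A$, $B$, $C$ were arbitrary, both containments asserted in the lemma follow.

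The main point here is conceptual rather than computational: one must resist the temptation to claim equality by combining Lemma~\ref{lemma 2.1}(i) with a single Kronecker expression on the interval side. The reason is precisely what is noted immediately before the lemma, namely that interval matrix multiplication is only sub-distributive and is not associative, so in general $({\bf{A}}{\bf{B}}){\bf{C}}\neq{\bf{A}}({\bf{B}}{\bf{C}})$ and neither coincides with $({\bf{C}}^{\top}\otimes {\bf{A}})\vect({\bf{B}})$. The correct move, illustrated above, is to perform associativity on \emph{point} matrices via Lemma~\ref{lemma 2.1}(i) first and only afterwards inflate the entries back to intervals through inclusion isotonicity, thereby obtaining the stated one-sided inclusion for either parenthesization.
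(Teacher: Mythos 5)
Your argument is correct and is precisely the reasoning the paper leaves implicit: it reduces to the point-matrix identity of Lemma~\ref{lemma 2.1}(i) followed by two applications of inclusion isotonicity (the enclosure property), one for each parenthesization. The paper gives no explicit proof, merely citing the enclosure property and \cite{FroHas2012}, so your write-up simply fills in the same standard steps.
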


In general, the solution set (\ref{eq.2.1}) has a very complicated structure. 
If $\bf{A}$, $\bf{B}$, $\bf{C}$ and $\bf{D}$ contain only real intervals and if
$X$ belongs to the solution set (\ref{eq.2.1}) of the interval generalized Sylvester matrix equation (\ref{eq.1.5}), then using Proposition \ref{proposition 2.1}, we can rewrite (\ref{aeq 1}) as
\begin{displaymath}
\left\{ \begin{array}{ll}
A^{c}XB^{c}+C^{c}XD^{c} - \Mag({\bf{A}})|X|B^{\Delta}-  A^{\Delta}|X|| B^{c}|  -\Mag({\bf{C}})|X|D^{\Delta}-  C^{\Delta}|X|| D^{c}| \leq  \overline{F},\vspace{.3 cm}\\
A^{c}XB^{c}+C^{c}XD^{c} +\Mag({\bf{A}})|X|B^{\Delta}+  A^{\Delta}|X|| B^{c}|  +\Mag({\bf{C}})|X|D^{\Delta}+  C^{\Delta}|X ||D^{c}| \geq \underline{F}.
\end{array} \right.
\end{displaymath}
Now, if we define the sign matrix $M$ corresponding to the solution matrix $X$ such that $M_{ij}={\textrm{sgn}}(X_{ij})$ then it is obvious that $|X|=M\circ X$, where $\circ$ stands for the Hadamard product. So the following linear programming problems can be solved to minimize each component $X_{ij}$
\begin{equation*}
\begin{array}{ll@{}ll}
\text{minimize} \hspace{.2 cm} X_{ij} &\\
\text{subject to}& \\
                \left\{ \begin{array}{ll}
A^{c}XB^{c}+C^{c}XD^{c} -\mathcal{T}(X) \leq \overline{F},\vspace{.1 cm}\\
A^{c}XB^{c}+C^{c}XD^{c} + \mathcal{T}(X) \geq \underline{F},
\end{array} \right.
\end{array}
\end{equation*}
for all possible sign matrices $M$ and therein $\mathcal{T}(X)=\Mag({\bf{A}})(M\circ X)B^{\Delta}+  A^{\Delta}(M\circ X)| B^{c}|  +\Mag({\bf{C}})(M\circ X)D^{\Delta}+  C^{\Delta}(M\circ X)|D^{c}|$. The similar linear programming problems must be solved to maximize each component $X_{ij}$. This approach encloses the precise solution set of the interval generalized Sylvester matrix equation (\ref{eq.1.5}). However, there are $2mn\times 2^{mn}$ linear programming problems to be solved which makes the problem very troublesome even for small values of $m$ and $n$. This motivates us to propose another methods for enclosing the solution set (\ref{eq.2.1}) with lower computational costs, but of course on account of possible loss of tightness.

\subsection{A modified Krawczyk method for enclosing the solution set}\label{subsection 2.2}
For an interval linear system ${\bf{A}}x={\bf{b}}$ with  given real vector $\tilde{x}$ and real matrix $R$, the standard Krawczyk operator ${\bf{k}}(\tilde{x},{\bf{x}})$ is defined as
\begin{equation*}
{\bf{k}}(\tilde{x},{\bf{x}})=\tilde{x}-R({\bf{A}}\tilde{x}-{\bf{b}})+(I-R{\bf{A}})({\bf{x}}-\tilde{x}), \hspace{.4 cm} \tilde{x}\in{\bf{x}},
\end{equation*}
in which $I$ stands for the identity matrix. The following theorem gives the important properties of this operator.

\begin{theorem}\cite{JanRum1991,Rum1994}\label{theorem 2.2}
Let ${\bf{A}}\in{\mathbb{IR}}^{n\times n}$, ${\bf{b}}\in{\mathbb{IR}}^{n}$ be given  and let for some $R\in{\mathbb{R}}^{n\times n}$, $\tilde{x}\in{\mathbb{R}}^{n}$ and ${\bf{x}}\in{\mathbb{IR}}^{n}$
\begin{equation*}
{\bf{k}}(\tilde{x},{\bf{x}}+\tilde{x})
\subseteq {\textrm{int}}({\bf{x}}+\tilde{x}).
\end{equation*}
Then, $R$ and every matrix $A\in{\bf{A}}$ is nonsingular and for all $A\in{\bf{A}}$, $b\in{\bf{b}}$ the corresponding linear system $Ax=b$ is uniquely solvable with solution $\hat{x}$ satisfying $\hat{x}\in\tilde{x}+{\bf{x}}$. Therefore, $\Xi({\bf{A}},{\bf{b}})\subseteq\tilde{x}+{\bf{x}}$.
\end{theorem}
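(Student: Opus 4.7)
The plan is to view the strict inclusion ${\bf{k}}(\tilde{x},{\bf{x}}+\tilde{x})\subseteq\mathrm{int}({\bf{x}}+\tilde{x})$ as simultaneously encoding two things: (a)~an into-the-interior property of a real affine map associated to each $A\in{\bf{A}}$, $b\in{\bf{b}}$, which supplies existence of a solution via Brouwer's fixed-point theorem, and (b)~a spectral-contraction property of $I-RA$, which supplies nonsingularity of $R$ and of each $A\in{\bf{A}}$, as well as uniqueness.

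First I would, for each fixed $A\in{\bf{A}}$ and $b\in{\bf{b}}$, introduce the real affine map
\[
T_{A,b}(x):=x-R(Ax-b)=\tilde{x}-R(A\tilde{x}-b)+(I-RA)(x-\tilde{x}).
\]
Provided $R$ is nonsingular, the fixed points of $T_{A,b}$ in $\tilde{x}+{\bf{x}}$ are exactly the solutions of $Ax=b$ lying in $\tilde{x}+{\bf{x}}$. The inclusion isotonicity of interval arithmetic, applied term-by-term to this expression, gives $T_{A,b}(\tilde{x}+{\bf{x}})\subseteq{\bf{k}}(\tilde{x},\tilde{x}+{\bf{x}})$, and combined with the hypothesis this yields $T_{A,b}(\tilde{x}+{\bf{x}})\subseteq\mathrm{int}(\tilde{x}+{\bf{x}})$. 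Thus $T_{A,b}$ continuously maps the compact convex box $\tilde{x}+{\bf{x}}$ into its own interior, and Brouwer's theorem will supply a fixed point $\hat{x}\in\tilde{x}+{\bf{x}}$.

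Next I would extract the contraction. After the harmless shift that moves $\tilde{x}$ so that ${\bf{x}}$ is centred at the origin, comparing radii on the two sides of the strict inclusion via the midpoint-radius calculus recorded in~(\ref{eq.2.9}) yields, for every $A\in{\bf{A}}$, the componentwise strict inequality $|I-RA|\,x^{\Delta}<x^{\Delta}$. A standard Perron--Frobenius-type lemma (if a nonnegative matrix $M$ admits a positive $v$ with $Mv<v$, then $\rho(M)<1$) then gives $\rho(|I-RA|)<1$, hence $\rho(I-RA)<1$, and the Neumann series $\sum_{k\geq 0}(I-RA)^{k}$ converges to $(RA)^{-1}$. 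Consequently $RA$, and therefore both $R$ and each $A\in{\bf{A}}$, are nonsingular. Uniqueness of the fixed point now follows from the identity $\hat{x}_1-\hat{x}_2=(I-RA)(\hat{x}_1-\hat{x}_2)$ combined with $\rho(I-RA)<1$, and this proves $\Xi({\bf{A}},{\bf{b}})\subseteq\tilde{x}+{\bf{x}}$.

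The main obstacle I anticipate is the radius bookkeeping in the contraction step: after centering, one must correctly compute the radius of $(I-R{\bf{A}})({\bf{x}}-\tilde{x})$, isolate the dominant contribution $|I-RA|\,x^{\Delta}$ uniformly in $A\in{\bf{A}}$, and verify that the \emph{strict} box inclusion really translates into a componentwise \emph{strict} inequality on radii, rather than merely a nonstrict one. Once that estimate is secured, the Brouwer and Perron--Frobenius steps are mechanical.
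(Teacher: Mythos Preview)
The paper does not prove Theorem~\ref{theorem 2.2} at all; it is quoted from \cite{JanRum1991,Rum1994} and used as a black box. So there is no ``paper's own proof'' to compare against directly. That said, the paper does prove the closely analogous Theorem~\ref{theorem 2.4}, and it is instructive to compare your argument with that one.

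Your existence step is essentially identical to the paper's: pass from the interval Krawczyk enclosure to the real affine map $T_{A,b}$ via inclusion isotonicity, observe that $T_{A,b}$ sends the box into its own interior, and invoke Brouwer. For nonsingularity and uniqueness, however, the two arguments diverge. You extract the componentwise strict inequality $|I-RA|\,x^{\Delta}<x^{\Delta}$ and feed it into a Perron--Frobenius lemma to conclude $\rho(I-RA)<1$, whence $RA$ is invertible and the fixed point is unique. The paper's proof of Theorem~\ref{theorem 2.4} instead argues by contradiction: if $RA$ were singular, pick $v\neq 0$ with $RAv=0$; then $\hat{x}+\alpha v$ is a fixed point of $T_{A,b}$ for every scalar $\alpha$, and for a suitable $\alpha^{*}$ this fixed point lands on $\partial{\bf{x}}$, contradicting $T_{A,b}({\bf{x}}+\tilde{x})\subseteq\mathrm{int}({\bf{x}}+\tilde{x})$.

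Both routes are correct. Your approach yields the quantitative bonus $\rho(I-RA)<1$, which is sometimes useful downstream, but it requires the radius bookkeeping you yourself flagged as the main obstacle (in particular you must argue that strict box inclusion forces $x^{\Delta}>0$ componentwise so that the Perron--Frobenius lemma applies). The paper's boundary-contradiction argument is shorter and sidesteps that bookkeeping entirely, at the cost of giving no spectral information.
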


In the above theorem $\Xi({\bf{A}},{\bf{b}})$ stands for the solution set of the interval linear system ${\bf{A}}x={\bf{b}}$. The Krawczyk operator when applied to the transformed system ${\bf{Q}}x={\bf{f}}$ in (\ref{eq.1.6}) will be very costly to evaluated. In this case, it is
\begin{equation*}
{\bf{k}}(\tilde{x},{\bf{x}})=\tilde{x}-R[({\bf{B}}^{\top}\otimes{\bf{A}}+{\bf{D}}^{\top}\otimes{\bf{C}})\tilde{x}-{\bf{f}}]+ [I_{mn}-R({\bf{B}}^{\top}\otimes{\bf{A}}+ {\bf{D}}^{\top}\otimes{\bf{C}})]({\bf{x}}-\tilde{x}), \hspace{.4 cm} \tilde{x}\in{\bf{x}},
\end{equation*}
where $R\in{\mathbb{K}}^{mn\times mn}$ usually is a computed (approximate) inverse of ${\textrm{mid}}({\bf{B}}^{\top}\otimes{\bf{A}}+{\bf{D}}^{\top}\otimes{\bf{C}})$. Since ${\bf{B}}^{\top}\otimes{\bf{A}}+{\bf{D}}^{\top}\otimes{\bf{C}}$ is an $mn$-by-$mn$ matrix, it is obvious that computing such an approximate inverse is very costly. This motivates us to develop a modification of Krawczyk type method reducing the computational costs considerably. Our approach consists of applying a Krawczyk type method on a preconditioned system obtained from digonalizations of the midpoint of interval matrices ${\bf{A}}$, ${\bf{B}}$, ${\bf{C}}$ and ${\bf{D}}$. For obtaining outer estimations for the solution set of the interval generalized Sylvester matrix equation (\ref{eq.1.5}), we use the following theorem that has been proved by Frommer and Hashemi \cite{FroHas2009} and describes the substance of all Krawczyk type verification methods for a general function $f\colon D\subset{\mathbb{C}}^{N}\rightarrow {\mathbb{C}}^{N}$. A mapping
\begin{equation*}
A \colon D\times D \rightarrow {\mathbb{C}}^{N\times N},
\end{equation*}
is called a slope for $f$ if
\begin{equation*}
f(y)-f(x)=A(y,x)(y-x)  \hspace{.5 cm} {\textrm{for all $x,y\in D$}}.
\end{equation*}

\begin{theorem}\cite{FroHas2009}\label{theorem 2.3}
Assume that $f\colon D\subset{\mathbb{C}}^{N}\rightarrow {\mathbb{C}}^{N}$ is continuous in $D$. Let $\tilde{x}\in D$ and ${\bf{x}}\in{\mathbb{IC}}^{N}$ be such that $\tilde{x}+{\bf{x}}\in D$. Moreover, assume that $\mathcal{A}\subset{\mathbb{C}}^{N\times N}$ is a set of matrices containing all slopes $A(\tilde{x},y)$ for $y\in\tilde{x}+{\bf{x}}=:{\bf{z}}$. Finally, let $R\in{\mathbb{C}}^{N\times N}$. Denote $\mathcal{K}_{f}(\tilde{x},R,{\bf{x}},\mathcal{A})$ the set
\begin{equation}\label{eq.2.12}
\mathcal{K}_{f}(\tilde{x},R,{\bf{x}},\mathcal{A}):=\{-Rf(\tilde{x})+(I-RA)x: A\in\mathcal{A}, x\in{\bf{x}}\}.
\end{equation}
Then if
\begin{equation}\label{eq.2.13}
\mathcal{K}_{f}(\tilde{x},R,{\bf{x}},\mathcal{A})\subseteq {\textrm{int}}({\bf{x}}),
\end{equation}
the function $f$ has a zero $x^{*}$ in $\tilde{x}+\mathcal{K}_{f}(\tilde{x},R,{\bf{x}},\mathcal{A})\subseteq{\bf{z}}$. Moreover, if $\mathcal{A}$ also contains all slope matrices $A(y,x)$ for $x,y\in{\bf{z}}$, then this zero is unique in ${\bf{z}}$.
\end{theorem}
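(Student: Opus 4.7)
The plan is to recast the statement as a fixed-point problem for the Krawczyk-type map $g(y) := y - R f(y)$ on the compact convex set ${\bf{z}} = \tilde{x} + {\bf{x}}$, apply Brouwer's fixed-point theorem, and then promote the fixed point to an actual zero of $f$ by arguing that $R$ must be nonsingular. More concretely, I would use the slope identity $f(y) = f(\tilde{x}) + A(\tilde{x},y)(y - \tilde{x})$ to rewrite, for any $y = \tilde{x} + x \in {\bf{z}}$,
\[
g(y) - \tilde{x} \;=\; x - R f(y) \;=\; -R f(\tilde{x}) + \bigl(I - R A(\tilde{x}, y)\bigr)\, x.
\]
Since $A(\tilde{x}, y) \in \mathcal{A}$ and $x \in {\bf{x}}$, the right-hand side belongs to $\mathcal{K}_{f}(\tilde{x}, R, {\bf{x}}, \mathcal{A})$, and hypothesis (\ref{eq.2.13}) then gives $g({\bf{z}}) \subseteq \tilde{x} + \textrm{int}({\bf{x}}) \subseteq {\bf{z}}$. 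Continuity of $f$ on $D$ makes $g$ continuous, and ${\bf{z}}$ is homeomorphic to a closed ball in $\mathbb{R}^{2N}$, so Brouwer's theorem supplies $y^{*} \in {\bf{z}}$ with $g(y^{*}) = y^{*}$, i.e.\ $R f(y^{*}) = 0$; the identity above additionally places $y^{*} - \tilde{x}$ in $\mathcal{K}_{f}$, so $y^{*} \in \tilde{x} + \mathcal{K}_{f}$.

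The main obstacle is passing from $R f(y^{*}) = 0$ to $f(y^{*}) = 0$, which requires that $R$ be injective. I would handle this by fixing any $A \in \mathcal{A}$ and looking at the affine map $\phi_{A}(x) := -R f(\tilde{x}) + (I - R A) x$; the hypothesis forces $\phi_{A}({\bf{x}}) \subseteq \textrm{int}({\bf{x}})$. Representing ${\bf{x}}$ in midpoint-radius form with center $x^{c}$ and radius vector $r > 0$, the worst-case estimate of $|\phi_{A}(x) - x^{c}|$ over $x \in {\bf{x}}$ yields the componentwise strict inequality
\[
|\phi_{A}(x^{c}) - x^{c}| + |I - R A|\, r < r,
\]
and in particular $|I - R A|\, r < r$. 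A standard Perron--Frobenius argument then forces $\rho(|I - R A|) < 1$, hence $\rho(I - R A) < 1$, so that $R A = I - (I - R A)$ is invertible; in particular both $R$ and $A$ are nonsingular. Applying $R^{-1}$ to $R f(y^{*}) = 0$ gives $f(y^{*}) = 0$, and $x^{*} := y^{*} \in \tilde{x} + \mathcal{K}_{f}$ is the required zero.

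For the uniqueness addendum, assume $\mathcal{A}$ additionally contains $A(y_{1}, y_{2})$ for all $y_{1}, y_{2} \in {\bf{z}}$, and let $y_{1}, y_{2} \in {\bf{z}}$ both be zeros of $f$. The slope identity gives $0 = f(y_{1}) - f(y_{2}) = A(y_{1}, y_{2})(y_{1} - y_{2})$, and by the previous paragraph every matrix in $\mathcal{A}$ is nonsingular; thus $A(y_{1}, y_{2})$ is invertible and $y_{1} = y_{2}$, completing the argument.
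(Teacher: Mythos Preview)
The paper does not give its own proof of this theorem; it is quoted verbatim from \cite{FroHas2009}. Your argument is correct and follows the standard route: recast the problem as a fixed-point equation for $g(y)=y-Rf(y)$, use the slope identity to show $g(\mathbf{z})\subseteq\tilde{x}+\mathcal{K}_{f}\subseteq\tilde{x}+\textrm{int}(\mathbf{x})$, apply Brouwer on the compact convex set $\mathbf{z}$, and then upgrade $Rf(y^{*})=0$ to $f(y^{*})=0$ by establishing that $R$ is nonsingular.

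The paper does, however, reproduce exactly this template in its proof of Theorem~\ref{theorem 2.4} for the particular affine $f$ coming from the preconditioned Sylvester system, so there is something to compare. The one substantive difference is the nonsingularity step. You derive the componentwise strict inequality $|I-RA|\,r<r$ from $\phi_{A}(\mathbf{x})\subseteq\textrm{int}(\mathbf{x})$ and conclude $\rho(I-RA)<1$ via a Perron--Frobenius argument, obtaining invertibility of both $R$ and every $A\in\mathcal{A}$. The paper instead argues topologically: if $RAv=0$ for some $v\neq 0$, then the entire line $x^{*}+\alpha v$ consists of fixed points of $g$, and for a suitable $\alpha^{*}$ this line meets $\partial\mathbf{x}$, contradicting $g(\mathbf{x})\subseteq\textrm{int}(\mathbf{x})$. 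Your spectral approach yields the additional quantitative byproduct $\rho(I-RA)<1$ for every $A\in\mathcal{A}$, which you then reuse directly for the uniqueness claim; the paper's line-of-fixed-points argument is shorter but does not record this spectral information.
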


It is to be noted that generally to solve the interval linear system ${\bf{A}}x={\bf{b}}$, for holding the crucial condition (\ref{eq.2.13}) from Theorem \ref{theorem 2.3}, $R$ should be a good approximation for the inverse of ${\textrm{mid}}({\bf{A}})$. The point vector $\tilde{x}$ usually is considered as an approximation to the solution of the midpoint system, i.e., ${\textrm{mid}}({\bf{A}})x={\textrm{mid}}({\bf{b}})$. Also the interval vector ${\bf{x}}$ must be chosen in such a way that (\ref{eq.2.13}) be satisfied. We use $\epsilon$-inflation method \cite{Rum1999} to obtain ${\bf{x}}$.

\subsubsection*{Simultaneous diagonalization assumption.}
Using Theorem \ref{theorem 2.3}, we want to propose a technique for obtaining outer estimations for the solution set of the interval generalized Sylvester matrix equation ${\bf{A}}X{\bf{B}}+{\bf{C}}X{\bf{D}}={\bf{F}}$ provided that $A^{c}$ and $C^{c}$ are simultaneously diagonalizable and also $B^{c}$ and $D^{c}$ are simultaneously diagonalizable, i.e., there are invertible matrices $U$ and $V$ such that
\begin{align}\label{eq.2.14}
A^{c}=UD_{A}U^{-1}, \hspace{.3 cm} C^{c}=UD_{C}U^{-1} \hspace{.6 cm} {\textrm{with $U\in{\mathbb{K}}^{m\times m}$ and $D_{A},D_{C}$ are diagonal}},
\\\label{eq.2.15}
B^{c}=VD_{B}V^{-1}, \hspace{.3 cm} D^{c}=VD_{D}V^{-1} \hspace{.6 cm} {\textrm{with $V\in{\mathbb{K}}^{n\times n}$ and $D_{B},D_{D}$ are diagonal}}.
\end{align}
In fact, the columns of $U$ are eigenvectors of $A^{c}$ and $C^{c}$ and similarly the columns of $V$ are eigenvectors of $B^{c}$ and $D^{c}$. It is worth noting that two matrices are simultaneously diagonalizable if and only if they commute. There are some special and important cases which arise in many practical applications such that this condition holds for them easier and so our approach can be applied for them easily. For some instances consider the interval Lyapunov matrix equation
\begin{equation*}
{\bf{A}}X+X{\bf{A}}^{\top}={\bf{B}}
\end{equation*}
that (taking into account uncertainties) arises in several applications in control theory or the interval Sylvester matrix equation
\begin{equation*}
{\bf{A}}X+X{\bf{B}}={\bf{C}}
\end{equation*}
that (taking into account uncertainties) plays a vital role in many problems such as control theory, image processing and many other applications or the interval version of Kalman-Yakubovich-conjugate matrix equation
\begin{equation*}
X-{\bf{A}}X{\bf{B}}={\bf{C}}
\end{equation*}
that (taking into account uncertainties) plays important roles in theory and applications of stability and control for discrete-time systems. When ${\bf{A}}={\bf{B}}^{\top}$, it is the well-known Stein equation, see \cite{JiaWei2003}. Or the interval continuous-time symmetric Sylvester matrix equation
\begin{equation*}
{\bf{A}}X{\bf{E}}^{\top}+{\bf{E}}X{\bf{A}}^{\top}={\bf{C}},
\end{equation*}
and its discrete-time counterpart
\begin{equation*}
{\bf{A}}X{\bf{A}}^{\top}-{\bf{E}}X{\bf{E}}^{\top}={\bf{C}},
\end{equation*}
which are of particular interest in control theory \cite{GarLau1992}.

\subsubsection*{Preconditioning.}
The pre-multiplication with $U^{-1}$ and post-multiplication with $V$ transform the original system (\ref{eq.1.5}) into a new system
\begin{equation}\label{eq.2.16}
{\bf{A}}_{p}Y{\bf{B}}_{p}+{\bf{C}}_{p}Y{\bf{D}}_{p}={\bf{F}}_{p},
\end{equation}
in which ${\bf{A}}_{p}=U^{-1}{\bf{A}}U$, ${\bf{B}}_{p}=V^{-1}{\bf{B}}V$, ${\bf{C}}_{p}=U^{-1}{\bf{C}}U$, ${\bf{D}}_{p}=V^{-1}{\bf{D}}V$, ${\bf{F}}_{p}=U^{-1}{\bf{F}}V$ and $Y=U^{-1}XV$. It is to be noted that generally for multiplication of three interval matrices ${\bf{G}},{\bf{H}},{\bf{K}}$ of compatible sizes, the associative law does not hold, i.e., $({\bf{G}}{\bf{H}}){\bf{K}}$ is different from ${\bf{G}}({\bf{H}}{\bf{K}})$. But if ${\bf{G}}$ and ${\bf{K}}$ are thin then $({\bf{G}}{\bf{H}}){\bf{K}}={\bf{G}}({\bf{H}}{\bf{K}})$ and so in this case parenthesis can be missed, see \cite{Neu1990}. By this reason, in the formulation of ${\bf{A}}_{p},{\bf{B}}_{p},{\bf{C}}_{p},{\bf{D}}_{p},{\bf{F}}_{p}$ that are constructed by multiplication of three matrices, we can omit the parenthesis. Also all coefficient matrices ${\bf{A}}_{p},{\bf{B}}_{p},{\bf{C}}_{p},{\bf{D}}_{p}$ in (\ref{eq.2.16}) have diagonal midpoints, in fact theoretically we have $A_{p}^{c}=D_{A}$, $B_{p}^{c}=D_{B}$, $C_{p}^{c}=D_{C}$ and $D_{p}^{c}=D_{D}$. So the original system (\ref{eq.1.5}) has been modified to the system (\ref{eq.2.16}) with more tractable coefficient matrices.

We obtain outer estimations for the solution set of the interval generalized Sylvester matrix equation (\ref{eq.1.5}) using the transformed system (\ref{eq.2.16}). First, we use Theorem~ \ref{theorem 2.3} for enclosing the solution set of interval system (\ref{eq.2.16}) and then we will transfer the result enclosure to the original system (\ref{eq.1.5}) using $U$ and $V$. Let $U$, $V$, $D_{A}$, $D_{B}$, $D_{C}$ and $D_{D}$ are numerically computed quantities obtained by a standard method such as Matlab's \verb"eig" function to get the decompositions (\ref{eq.2.14}) and (\ref{eq.2.15}). These quantities are computed numerically which fulfill (\ref{eq.2.14}) and (\ref{eq.2.15}) just approximately. So the midpoint of interval matrices ${\bf{A}}_{p}$, ${\bf{B}}_{p}$, ${\bf{C}}_{p}$ and ${\bf{D}}_{p}$ will not be exactly diagonal. Hence we enclose ${\bf{A}}_{p}$, ${\bf{B}}_{p}$, ${\bf{C}}_{p}$ and ${\bf{D}}_{p}$ by some interval matrices with off-diagonal elements symmetric around zero as follows
\begin{subequations}\label{equation 1}
\begin{align}
 {\bf{A}}_{p}&=U^{-1}{\bf{A}}U \subseteq{\textrm{Diag}}({\textrm{diag}}({\bf{A}}_{p}))+{\textrm{midrad}}(0,\Mag({\textrm{Off}}({\bf{A}}_{p}))),
\\
 {\bf{B}}_{p}&=V^{-1}{\bf{B}}V \subseteq {\textrm{Diag}}({\textrm{diag}}({\bf{B}}_{p}))+{\textrm{midrad}}(0,\Mag({\textrm{Off}}({\bf{B}}_{p}))),
\\
 {\bf{C}}_{p}&=U^{-1}{\bf{C}}U \subseteq {\textrm{Diag}}({\textrm{diag}}({\bf{C}}_{p}))+{\textrm{midrad}}(0,\Mag({\textrm{Off}}({\bf{C}}_{p}))),
\\
 {\bf{D}}_{p}&=V^{-1}{\bf{D}}V \subseteq {\textrm{Diag}}({\textrm{diag}}({\bf{D}}_{p}))+{\textrm{midrad}}(0,\Mag({\textrm{Off}}({\bf{D}}_{p}))),
\end{align}
\end{subequations}
in which ${\textrm{Off}}({\bf{A}}_{p})$ shows the off-diagonal elements of ${\bf{A}}_{p}$, i.e., an interval matrix with zero diagonal entries and its off-diagonal entries are the same as off-diagonal entries of ${\bf{A}}_{p}$. 
So, from now on, we will assume that the midpoints of ${\bf{A}}_{p}$, ${\bf{B}}_{p}$, ${\bf{C}}_{p}$ and ${\bf{D}}_{p}$ are diagonal; otherwise we replace them by their enclosures as in (\ref{equation 1}).

Using Kronecker operator, interval system (\ref{eq.2.16}) is transformed to the following system
\begin{equation}\label{eq.2.17}
{\bf{Q}}_{p}y={\bf{f}}_{p},
\end{equation}
in which ${\bf{Q}}_{p}={\bf{B}}_{p}^{\top}\otimes {\bf{A}}_{p}+{\bf{D}}_{p}^{\top}\otimes{\bf{C}}_{p}$, $y={\vect}(Y)$ and  ${\bf{f}}_{p}={\vect}({\bf{F}}_{p})$. We should determine an approximate inverse for ${\textrm{mid}}({\bf{Q}}_{p})$. Using (\ref{eq.2.9}) we know that \begin{equation*}
{\textrm{mid}}({\bf{Q}}_{p})=B_{p}^{\top c}\otimes A_{p}^{ c}+ D_{p}^{\top c} \otimes C_{p}^{ c} =(V^{-1}B^{c}V)^{\top}\otimes(U^{-1}A^{c}U)+(V^{-1}D^{c}V)^{\top}\otimes(U^{-1}C^{c}U).
\end{equation*}
On the other hand because of the previously mentioned reasons, we know that $U^{-1}A^{c}U$, $V^{-1}B^{c}V$, $U^{-1}C^{c}U$ and $V^{-1}D^{c}V$ will not be exactly diagonal but we can expect them to be very close to $D_{A}$, $D_{B}$, $D_{C}$ and $D_{D}$, respectively. So if we define
\begin{equation*}
\Lambda=D_{B}\otimes D_{A}+D_{D}\otimes D_{C}.
\end{equation*}
Then we can expect that the diagonal matrix $\Lambda$ is a good approximation for $Q_{p}^{c}$ and so 
\begin{equation}\label{dfR}
R=\Lambda^{-1}
\end{equation}
to be a good approximate inverse for $Q_{p}^{c}$.

Let $A_{p}\in{\bf{A}}_{p}$, $B_{p}\in{\bf{B}}_{p}$, $C_{p}\in{\bf{C}}_{p}$, $D_{p}\in{\bf{D}}_{p}$ and $F_{p}\in{\bf{F}}_{p}$ be arbitrary point matrices. Put $f(y)=Q_{p}y-f_{p}$ in which $Q_{p}=B_{p}^{\top}\otimes A_{p}+D_{p}^{\top}\otimes C_{p}$ and $f_{p}={\vect}(F_{p})$. Now, for using Theorem~\ref{theorem 2.3} we should obtain an interval vector containing the set
\begin{equation}\label{eq.2.18}
\mathcal{H}=\{ -R(Q_{p}\tilde{x}-f_{p})+(I_{mn}-RQ_{p})x: A_{p}\in{\bf{A}}_{p}, B_{p}\in{\bf{B}}_{p}, C_{p}\in{\bf{C}}_{p}, D_{p}\in{\bf{D}}_{p}, F_{p}\in{\bf{F}}_{p}, x\in{\bf{x}}\}.
\end{equation}
We do this work by separately computing enclosures for the two following sets
\begin{equation}\label{eq.2.19}
\mathcal{M}=\{ -R(Q_{p}\tilde{x}-f_{p}): A_{p}\in{\bf{A}}_{p}, B_{p}\in{\bf{B}}_{p}, C_{p}\in{\bf{C}}_{p}, D_{p}\in{\bf{D}}_{p}, F_{p}\in{\bf{F}}_{p}\},
\end{equation}
\begin{equation}\label{eq.2.20}
\mathcal{N}=\{ (I_{mn}-RQ_{p})x: A_{p}\in{\bf{A}}_{p}, B_{p}\in{\bf{B}}_{p}, C_{p}\in{\bf{C}}_{p}, D_{p}\in{\bf{D}}_{p},  x\in{\bf{x}}\}.
\end{equation}

\begin{lemma}\label{lemma 2.3}
Consider the interval system (\ref{eq.2.16}). Let 
$\tilde{X}\in{\mathbb{K}}^{mn\times mn}$ and ${\bf{X}}\in{\mathbb{IK}}^{mn\times mn}$ with $\tilde{x}=\vect({\tilde{X}})$ and ${\bf{x}}=\vect({\bf{X}})$ are given. Using the introduced notation before, define
\begin{equation}\label{eq.2.23}
{\bf{M}}=({\bf{F}}_{p}-({\bf{A}}_{p}\tilde{X}){\bf{B}}_{p}-({\bf{C}}_{p}\tilde{X}){\bf{D}}_{p})./S,
\end{equation}
\begin{equation}\label{eq.2.28}
{\bf{N}}=\big<0,\big(A_{p}^{\Delta} X^{\Delta}|B_{p}^{c}|+ \Mag({\bf{A}}_{p})X^{\Delta}B_{p}^{\Delta} + C_{p}^{\Delta} X^{\Delta}|D_{p}^{c}|+ \Mag({\bf{C}}_{p})X^{\Delta}D_{p}^{\Delta} \big)./|S| \big>,
\end{equation}
\begin{equation}\label{eq.2.30}
{\bf{H}}={\bf{M}}+{\bf{N}},
\end{equation}
where  $S$ is the $m$-by-$n$ matrix defined by
\begin{equation*}
\Lambda= {\textrm{Diag}}({\vect}(S)).
\end{equation*}
Then for the set $\mathcal{H}$ defined by (\ref{eq.2.18}) we have
\begin{equation}\label{eq.2.29}
\mathcal{H}\subseteq {\vect}({\bf{H}}).
\end{equation}
\end{lemma}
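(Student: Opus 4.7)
The plan is to bound the sets $\mathcal{M}$ and $\mathcal{N}$ separately and then combine the two bounds: since every element of $\mathcal{H}$ is, by definition, a sum of an element of $\mathcal{M}$ and an element of $\mathcal{N}$ built from the same parameters $A_p,\ldots,D_p$, the Minkowski-sum containment $\mathcal{H}\subseteq\mathcal{M}+\mathcal{N}$ is immediate, so it is enough to establish $\mathcal{M}\subseteq\vect(\mathbf{M})$ and $\mathcal{N}\subseteq\vect(\mathbf{N})$. For the first inclusion I would fix arbitrary point representatives $A_p\in\mathbf{A}_p,\ldots,F_p\in\mathbf{F}_p$; Lemma~\ref{lemma 2.1}(i) gives $Q_p\tilde{x}=\vect(A_p\tilde{X}B_p+C_p\tilde{X}D_p)$, and because $R=\Lambda^{-1}=\textrm{Diag}(\vect(S))^{-1}$, Lemma~\ref{lemma 2.1}(ii) converts multiplication by $R$ into Hadamard division by $S$. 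This yields the point identity $-R(Q_p\tilde{x}-f_p)=\vect\bigl((F_p-A_p\tilde{X}B_p-C_p\tilde{X}D_p)./S\bigr)$, which by inclusion isotonicity of interval arithmetic (with the grouping $(\mathbf{A}_p\tilde{X})\mathbf{B}_p$ and $(\mathbf{C}_p\tilde{X})\mathbf{D}_p$, admissible because $\tilde{X}$ is thin) belongs to $\vect(\mathbf{M})$.

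For $\mathcal{N}$ the key algebraic observation is that, under the standing assumption that $A_p^c=D_A,\ B_p^c=D_B,\ C_p^c=D_C,\ D_p^c=D_D$ are diagonal, one has the entrywise identity $S\circ X=D_AXD_B+D_CXD_D$. Combined with $RQ_px=\vect\bigl((A_pXB_p+C_pXD_p)./S\bigr)$, obtained as in the previous paragraph, this gives the point-matrix formula
\begin{equation*}
(I_{mn}-RQ_p)x=\vect\bigl((D_AXD_B+D_CXD_D-A_pXB_p-C_pXD_p)./S\bigr).
\end{equation*}
The decisive step is then the telescoping identity $A_pXB_p-D_AXD_B=A_pX(B_p-D_B)+(A_p-D_A)XD_B$ together with its counterpart for the $C_p,D_p$ block. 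Taking componentwise absolute values and inserting the elementary bounds $|A_p|\leq\Mag(\mathbf{A}_p)$, $|A_p-D_A|\leq A_p^\Delta$, $|B_p-D_B|\leq B_p^\Delta$, $|D_B|=|B_p^c|$, together with $|X|\leq X^\Delta$ (the interval $\mathbf{X}$ produced by the $\epsilon$-inflation step in the Krawczyk scheme is centred at the origin, so its magnitude equals its radius), yields exactly the radius expression in the definition of $\mathbf{N}$ after componentwise division by $|S|$. Since that bound does not depend on $A_p,\ldots,D_p$ and $\mathbf{N}$ has midpoint zero, we obtain $\mathcal{N}\subseteq\vect(\mathbf{N})$.

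The two containments combine to $\mathcal{H}\subseteq\vect(\mathbf{M})+\vect(\mathbf{N})=\vect(\mathbf{M}+\mathbf{N})=\vect(\mathbf{H})$, which is (\ref{eq.2.29}). The principal difficulty is not analytic but bookkeeping: because interval arithmetic is neither distributive nor associative, the algebraic rewritings above, most importantly the point-matrix formula for $(I_{mn}-RQ_p)x$ and the telescoping identity for $A_pXB_p-D_AXD_B$, must be performed on point matrices, and only at the very end may interval quantities be substituted by invoking inclusion isotonicity. This is also why the factorisations $(\mathbf{A}_p\tilde{X})\mathbf{B}_p$ and $(\mathbf{C}_p\tilde{X})\mathbf{D}_p$ are fixed in the definition of $\mathbf{M}$ rather than left free.
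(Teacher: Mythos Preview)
Your argument is correct, and your treatment of $\mathcal{M}$ coincides with the paper's. For $\mathcal{N}$, however, you take a genuinely different route. The paper stays at the interval level: it first passes to the interval enclosure $\mathcal{N}\subseteq(I_{mn}-R\mathbf{Q}_p)\mathbf{x}$, then computes $R\mathbf{Q}_p=\langle RQ_p^c,|R|Q_p^\Delta\rangle=\langle I_{mn},|R|Q_p^\Delta\rangle$ in midpoint--radius form (so that $I_{mn}-R\mathbf{Q}_p$ has midpoint zero), multiplies by the symmetric interval $\mathbf{x}=\langle 0,x^\Delta\rangle$ to obtain $\langle 0,|R|Q_p^\Delta x^\Delta\rangle$, and finally inserts the explicit Kronecker formula~(\ref{eq.2.9}) for $Q_p^\Delta$ together with Lemma~\ref{lemma 2.1} to rewrite $|R|Q_p^\Delta x^\Delta$ in matrix form. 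You instead stay entirely with point matrices: using $S\circ X=D_AXD_B+D_CXD_D$ you obtain the exact point identity for $(I_{mn}-RQ_p)x$, apply the telescoping splitting $A_pXB_p-D_AXD_B=A_pX(B_p-D_B)+(A_p-D_A)XD_B$, and bound each factor by $\Mag(\mathbf{A}_p)$, $A_p^\Delta$, $B_p^\Delta$, $|B_p^c|$ and $|X|\leq X^\Delta$. Both routes rely on the same two structural facts (the centering $RQ_p^c=I_{mn}$, and the symmetry $X^c=0$) and land on exactly the same radius; your derivation is arguably more elementary since it never invokes the product formula~(\ref{eq.2.9}) or interval matrix multiplication, while the paper's computation makes the role of $Q_p^\Delta$ more transparent and connects directly to the circular-arithmetic framework used elsewhere.
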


\begin{proof}
First, for enclosing $\mathcal{M}$ in (\ref{eq.2.19}), for any arbitrary $A_{p}\in{\bf{A}}_{p}$, $B_{p}\in{\bf{B}}_{p}$, $C_{p}\in{\bf{C}}_{p}$, $D_{p}\in{\bf{D}}_{p}$, $F_{p}\in{\bf{F}}_{p}$ and a given vector $\tilde{x}=\vect(\tilde{X})\in\mathbb{K}^{mn}$, using Lemma \ref{lemma 2.1} and $R$ from (\ref{dfR}) we have
{\setlength\arraycolsep{2pt}
\begin{eqnarray}\label{eq.2.21}
&& -R(Q_{p}\tilde{x}-f_{p})= -R[(B_{p}^{\top}\otimes A_{p}+D_{p}^{\top}\otimes C_{p}){\vect}(\tilde{X})-{\vect}(F_{p})]
\nonumber\\
&& =-R[{\vect}(A_{p}\tilde{X}B_{p})+{\vect}(C_{p}\tilde{X}D_{p})-{\vect}(F_{p})]=-R{\vect}(A_{p}\tilde{X}B_{p}+C_{p}\tilde{X}D_{p} -F_{p})
\nonumber\\
&& = {\vect}\big( (F_{p}-A_{p}\tilde{X}B_{p}-C_{p}\tilde{X}D_{p})./S \big),
\end{eqnarray}}
By (\ref{equation 1}), since $A_{p}\in{\bf{A}}_{p}$, $B_{p}\in{\bf{B}}_{p}$, $C_{p}\in{\bf{C}}_{p}$ and $D_{p}\in{\bf{D}}_{p}$, if we define
\begin{equation*}
{\bf{M}}=({\bf{F}}_{p}-({\bf{A}}_{p}\tilde{X}){\bf{B}}_{p}-({\bf{C}}_{p}\tilde{X}){\bf{D}}_{p})./S,
\end{equation*}
then using (\ref{eq.2.21}) and due to enclosure property, we obtain $\mathcal{M}\subseteq {\vect}({\bf{M}})$.

Second, for enclosing $\mathcal{N}$ defined by (\ref{eq.2.20}), it is obvious that
\begin{equation}\label{eq.2.24}
\mathcal{N}\subseteq [I_{mn}-R({\bf{B}}_{p}^{\top}\otimes {\bf{A}}_{p}+{\bf{D}}_{p}^{\top}\otimes {\bf{C}}_{p})]{\bf{x}}.
\end{equation}
In Krawczyk type methods (Theorem \ref{theorem 2.2}), in each iterate of the Krawczyk operator, ${\bf{x}}$ can be taken to be symmetric. Here also we let ${\bf{x}}$ to be symmetric, i.e., ${\bf{x}}=\big< 0,x^{\Delta} \big>$. We want to determine $(I_{mn}-R{\bf{Q}}_{p}){\bf{x}}$ in which ${\bf{Q}}_{p}={\bf{B}}_{p}^{\top}\otimes {\bf{A}}_{p}+{\bf{D}}_{p}^{\top}\otimes{\bf{C}}_{p}$. Let ${\bf{Q}}_{p}=\big< Q_{p}^{c},Q_{p}^{\Delta} \big>$, so we have
\begin{align*}
(R{\bf{Q}}_{p})_{ij}
&= \sum_{k=1}^{mn} R_{ik}{\bf{Q}}_{pkj} 
 = \sum_{k=1}^{mn}\big< R_{ik},0 \big> \big< Q_{pkj}^{c},Q_{pkj}^{\Delta} \big>
\nonumber\\
& = \sum_{k=1}^{mn}\big< R_{ik}Q_{pkj}^{c},|R_{ik}|Q_{pkj}^{\Delta} \big>
= \left< \sum_{k=1}^{mn}R_{ik}Q_{pkj}^{c},\sum_{k=1}^{mn}|R_{ik}|Q_{pkj}^{\Delta} \right>
\nonumber\\
& = \big< (RQ_{p}^{c})_{ij}, (|R|Q_{p}^{\Delta})_{ij} \big> ,
\end{align*}
and thus $R{\bf{Q}}_{p}=\big< RQ_{p}^{c}, |R|Q_{p}^{\Delta} \big>=\big< I_{mn}, |R|Q_{p}^{\Delta} \big>$. Hence
\begin{equation*}
I_{mn}-R{\bf{Q}}_{p}=\big< I_{mn}, 0 \big>-\big< I_{mn}, |R|Q_{p}^{\Delta} \big>=\big<0, |R|Q_{p}^{\Delta} \big>.
\end{equation*}
So if we set ${\bf{W}}=\big<0, |R|Q_{p}^{\Delta} \big>$ then
\begin{align*}
 ({\bf{W}}{\bf{x}})_{i}&= \sum_{k=1}^{mn} {\bf{W}}_{ik}{\bf{x}}_{k} 
= \sum_{k=1}^{mn} \big<0, (|R|Q_{p}^{\Delta})_{ik} \big> \big<0, x_{k}^{\Delta} \big>
\nonumber\\
& = \sum_{k=1}^{mn} \big<0, (|R|Q_{p}^{\Delta})_{ik}x_{k}^{\Delta}  \big> =\left<0, \sum_{k=1}^{mn} (|R|Q_{p}^{\Delta})_{ik}x_{k}^{\Delta}  \right>
\nonumber\\
& = \big<0, (|R|Q_{p}^{\Delta}x^{\Delta})_{i}  \big>,
\end{align*}
and so
\begin{equation}\label{eq.2.25}
(I_{mn}-R{\bf{Q}}_{p}){\bf{x}}={\textrm{midrad}}(0,|R|Q_{p}^{\Delta}x^{\Delta}).
\end{equation}
Since ${\bf{Q}}_{p}={\bf{B}}_{p}^{\top}\otimes {\bf{A}}_{p}+{\bf{D}}_{p}^{\top}\otimes{\bf{C}}_{p}$, using (\ref{eq.2.24}) and (\ref{eq.2.25}) we obtain
\begin{equation}\label{eq.2.26}
{\mathcal{N}}\subseteq{\textrm{midrad}}(0,|R|Q_{p}^{\Delta}x^{\Delta}).
\end{equation}
By (\ref{eq.2.9}), we get $Q_{p}^{\Delta}=|B_{p}^{\top c}|\otimes A_{p}^{\Delta} + B_{p}^{\top \Delta}\otimes\Mag({\bf{A}}_{p})+   |D_{p}^{\top c}|\otimes C_{p}^{\Delta}+ D_{p}^{\top \Delta}\otimes\Mag({\bf{C}}_{p}) $. So if $x^{\Delta}={\vect}(X^{\Delta})$, then we have
{\setlength\arraycolsep{2pt}
\begin{eqnarray*}
 |R|Q_{p}^{\Delta}x^{\Delta}=&& |R| [ |B_{p}^{\top c}|\otimes A_{p}^{\Delta} + B_{p}^{\top \Delta}\otimes \Mag({\bf{A}}_{p})+
\nonumber\\
&&  |D_{p}^{\top c}|\otimes C_{p}^{\Delta}+ D_{p}^{\top \Delta}\otimes \Mag({\bf{C}}_{p})]{\vect}(X^{\Delta}),
\end{eqnarray*}}
which by using Lemma \ref{lemma 2.1} is equivalent to
{\setlength\arraycolsep{2pt}
\begin{eqnarray*}
|R|Q_{p}^{\Delta}x^{\Delta} &&= |R|{\vect} \big( A_{p}^{\Delta} X^{\Delta}|B_{p}^{c}|+ \Mag({\bf{A}}_{p})X^{\Delta}B_{p}^{\Delta} +       C_{p}^{\Delta} X^{\Delta}|D_{p}^{c}|+\Mag({\bf{C}}_{p})X^{\Delta}D_{p}^{\Delta}  \big)
\nonumber\\
&&= {\vect} \Big(  \big(A_{p}^{\Delta} X^{\Delta}|B_{p}^{c}|+ \Mag({\bf{A}}_{p})X^{\Delta}B_{p}^{\Delta} + C_{p}^{\Delta} X^{\Delta}|D_{p}^{c}|+\Mag({\bf{C}}_{p})X^{\Delta}D_{p}^{\Delta} \big)./|S| \Big).
\end{eqnarray*}}
So if we put
\begin{equation*}
{\bf{N}}=\big<0,\big(A_{p}^{\Delta} X^{\Delta}|B_{p}^{c}|+ \Mag({\bf{A}}_{p})X^{\Delta}B_{p}^{\Delta} + C_{p}^{\Delta} X^{\Delta}|D_{p}^{c}|+ \Mag({\bf{C}}_{p})X^{\Delta}D_{p}^{\Delta} \big)./|S| \big>,
\end{equation*}
then by (\ref{eq.2.26}) we conclude that $\mathcal{N}\subseteq {\vect}({\bf{N}})$.\\
If the interval matrices ${\bf{M}}$ and ${\bf{N}}$ are constructed by (\ref{eq.2.23}) and (\ref{eq.2.28}) respectively, then for the set $\mathcal{H}$ defined by (\ref{eq.2.18}) we will have
\begin{equation*}
\mathcal{H}\subseteq {\vect}({\bf{H}}),
\end{equation*}
in which
\begin{equation*}
{\bf{H}}={\bf{M}}+{\bf{N}}.\qedhere
\end{equation*}
\end{proof}

By the argument leading to construction of the interval matrices ${\bf{M}}$, ${\bf{N}}$ and ${\bf{H}}$ defined by (\ref{eq.2.23}), (\ref{eq.2.28}) and (\ref{eq.2.30}), respectively, we can represent the following theorem. Its proof in some parts is similar to the proof of Theorem 3.4 in \cite{HasDeh2012}. 

\begin{theorem}\label{theorem 2.4}
Consider the interval generalized Sylvester matrix equation (\ref{eq.1.5}) and let $R\in{\mathbb{K}}^{mn\times mn}$, $\tilde{X}\in{\mathbb{K}}^{m\times n}$ and ${\bf{X}}\in{\mathbb{IK}}^{m\times n}$ are given. If the interval matrix ${\bf{H}}$ obtained by (\ref{eq.2.30}) satisfies ${\bf{H}}\subseteq {\textrm{int}}({\bf{X}})$ then $R$ is nonsingular and for every $A\in{\bf{A}}$, $B\in{\bf{B}}$, $C\in{\bf{C}}$, $D\in{\bf{D}}$ and $F\in{\bf{F}}$ the corresponding generalized Sylvester matrix equation $AXB+CXD=F$ has a unique solution $\hat{X}\in U(\tilde{X}+{\bf{X}})V^{-1}$. Therefore $\Xi\subseteq U(\tilde{X}+{\bf{X}})V^{-1}$.
\end{theorem}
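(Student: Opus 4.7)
The plan is to apply Theorem~\ref{theorem 2.3} to the preconditioned system (\ref{eq.2.17}) and then transport the resulting enclosure back to the original variables through the similarity $X = UYV^{-1}$.

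First I would set $f(y) = Q_{p}y - f_{p}$, where $Q_{p} = B_{p}^{\top}\otimes A_{p} + D_{p}^{\top}\otimes C_{p}$ and $f_{p} = \vect(F_{p})$. Since $f$ is affine linear, every slope $A(\tilde{x},y)$ is identically $Q_{p}$, so the slope set $\mathcal{A}$ in Theorem~\ref{theorem 2.3} may be taken as the family of all such $Q_{p}$ with $A_{p}\in{\bf A}_{p}$, $B_{p}\in{\bf B}_{p}$, $C_{p}\in{\bf C}_{p}$, $D_{p}\in{\bf D}_{p}$. With $R = \Lambda^{-1}$ as in (\ref{dfR}), $\tilde{x} = \vect(\tilde{X})$ and ${\bf x} = \vect({\bf X})$, the Krawczyk set $\mathcal{K}_{f}(\tilde{x}, R, {\bf x}, \mathcal{A})$ defined by (\ref{eq.2.12}) coincides term by term with the set $\mathcal{H}$ in (\ref{eq.2.18}), so by Lemma~\ref{lemma 2.3} we have $\mathcal{K}_{f}(\tilde{x}, R, {\bf x}, \mathcal{A}) = \mathcal{H} \subseteq \vect({\bf H})$.

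Next I would translate the hypothesis ${\bf H}\subseteq {\textrm{int}}({\bf X})$ via $\vect$ into $\vect({\bf H}) \subseteq {\textrm{int}}({\bf x})$, which is precisely the inclusion (\ref{eq.2.13}). Theorem~\ref{theorem 2.3} then furnishes a unique zero $\hat{y} \in \tilde{x} + {\bf x}$ of $f$ for every admissible choice of parameters, or equivalently a unique solution $\hat{Y} \in \tilde{X} + {\bf X}$ of the preconditioned matrix equation (\ref{eq.2.16}). The same Krawczyk-type contraction underlying Theorem~\ref{theorem 2.2} also shows, as a byproduct, that $R$ and each $Q_{p}$ are nonsingular, which is the part of the conclusion about invertibility of~$R$.

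Finally I would unwind the preconditioning: given any $A\in{\bf A}$, $B\in{\bf B}$, $C\in{\bf C}$, $D\in{\bf D}$, $F\in{\bf F}$, the substitution $X = UYV^{-1}$ turns $AXB+CXD=F$ into $A_{p}YB_{p}+C_{p}YD_{p}=F_{p}$ with $A_{p}=U^{-1}AU$, $B_{p}=V^{-1}BV$, $C_{p}=U^{-1}CU$, $D_{p}=V^{-1}DV$, $F_{p}=U^{-1}FV$, and the containments (\ref{equation 1}) guarantee $A_{p}\in{\bf A}_{p}$, $B_{p}\in{\bf B}_{p}$, $C_{p}\in{\bf C}_{p}$, $D_{p}\in{\bf D}_{p}$. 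The unique $\hat{Y}$ from the previous step then delivers $\hat{X}=U\hat{Y}V^{-1}\in U(\tilde{X}+{\bf X})V^{-1}$ as the unique solution of the original equation, yielding $\Xi\subseteq U(\tilde{X}+{\bf X})V^{-1}$. The only technically nontrivial step is the enclosure $\mathcal{H}\subseteq\vect({\bf H})$, which has already been carried out in Lemma~\ref{lemma 2.3}; the remainder is essentially bookkeeping to convert between Kronecker/vectorised and matrix formulations and between preconditioned and original coordinates, and to observe that no overestimation is introduced when passing from ${\bf A}$ to ${\bf A}_{p}$ because $U$ and $V$ are point matrices.
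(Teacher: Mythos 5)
Your proposal is correct and follows essentially the same route as the paper: precondition with $U,V$, pass to the Kronecker/vectorised system, invoke Lemma~\ref{lemma 2.3} to get $\mathcal{H}\subseteq\vect({\bf H})\subseteq\textrm{int}({\bf x})$, and transport the resulting enclosure back via $X=UYV^{-1}$. The only cosmetic difference is that the paper inlines the Brouwer fixed-point argument and the boundary contradiction establishing nonsingularity of $R$ and $Q_{p}$, whereas you delegate these to Theorems~\ref{theorem 2.2} and~\ref{theorem 2.3}; the underlying argument is identical.
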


\emph{Remark.} Notice that the enclosure of $\Xi$ is of the form $U(\tilde{X}+{\bf{X}})V^{-1}$, which may be convenient for further processing (similarly as affine interval arithmetic) contrary to simple interval evaluation resulting to an interval matrix.

\begin{proof}
Suppose $A\in{\bf{A}}$, $B\in{\bf{B}}$, $C\in{\bf{C}}$, $D\in{\bf{D}}$ and $F\in{\bf{F}}$ and define $A_{p}=U^{-1}AU$, $B_{p}=V^{-1}BV$, $C_{p}=U^{-1}CU$, $D_{p}=V^{-1}DV$,  $F_{p}=U^{-1}FV$ and $Q_{p}=B_{p}^{\top}\otimes A_{p}+D_{p}^{\top}\otimes C_{p}$, where $U$ and $V$ are eigenvector matrices introduced by (\ref{eq.2.14}) and (\ref{eq.2.15}), respectively. By fundamental enclosure property of circular interval arithmetic and using relation (\ref{eq.2.18}) and the result of Lemma \ref{lemma 2.3} we can write
\begin{equation}\label{eq.2.31}
\{ R(f_{p}-Q_{p}\tilde{x})+(I_{mn}-RQ_{p})x:  x\in{\bf{x}}\}\subseteq {\vect}({\bf{H}}),
\end{equation}
in which $f_{p}={\vect}(F_{p})$, $\tilde{x}={\vect}(\tilde{X})$ and ${\bf{x}}={\vect}({\bf{X}})$. Since ${\bf{H}}\subseteq{\textrm{int}}({\bf{X}})$, by (\ref{eq.2.31}) we conclude that
\begin{equation}\label{eq.2.32}
\{ R(f_{p}-Q_{p}\tilde{x})+(I_{mn}-RQ_{p})x:  x\in{\bf{x}}\}\subseteq {\textrm{int}}({\bf{x}}).
\end{equation}
Now, we define the function $g:\mathbb{K}^{mn}\rightarrow {\mathbb{K}^{mn}}$ as follows
\begin{equation*}
g(x)= R(f_{p}-Q_{p}\tilde{x})+(I_{mn}-RQ_{p})x=x+R[f_{p}-Q_{p}(\tilde{x}+x)].
\end{equation*}
According to (\ref{eq.2.32}), the continuous function $g$ maps the compact convex set ${\bf{x}}$ into itself and so Brouwer's fixed point theorem implies that there exists $x^{*}\in{\bf{x}}$ such that $g(x^{*})=x^{*}$ and hence
\begin{equation}\label{eq.2.33}
R[f_{p}-Q_{p}(\tilde{x}+x^{*})]=0.
\end{equation}
Now, we show that $R$ and $Q_{p}$ are nonsingular. If $R$ or $Q_{p}$ are singular then there exists a nonzero vector $v\neq 0$ such that $RQ_{p}v=0$. By definition of the function $g$, $x^{*}+\alpha v$  for all $\alpha\in\mathbb{K}$  would be a fixed point of $g$. On the other hand it is obvious that there would be some $\alpha^{*}\in\mathbb{K}$ such that $x^{*}+\alpha^{*} v\in \partial{\bf{x}}$ which is a contradiction to (\ref{eq.2.32}). So $R$ and $Q_{p}$ are nonsingular. Because $Q_{p}$ is nonsingular so the linear system $Q_{p}y=f_{p}$ in which $y={\vect}(U^{-1}XV)$  is uniquely solvable. But this equation is equivalent to the equation $AXB+CXD=F$ (in vectorization form) and so the uniquely solvability of this equation is concluded. Because $R$ is nonsingular so by (\ref{eq.2.33}) we conclude that $Q_{p}(\tilde{x}+x^{*})=f_{p}$ and hence the system $Q_{p}y=f_{p}$ has a unique solution $\hat{y}=\tilde{x}+x^{*}\in\tilde{x}+{\bf{x}}$. Considering the relation between two systems $Q_{p}y=f_{p}$ and $Qx=f$ in which $Q=B^{\top}\otimes A+D^{\top}\otimes C$, $x=(V^{-\top}\otimes U)y$ and $f={\vect}(F)$ shows that  $Qx=f$ has a unique solution $\hat{x}\in(V^{-\top}\otimes U)(\hat{x}+\bf{x})$. So the generalized Sylvester matrix equation $AXB+CXD=F$ has a unique solution $\hat{X}\in U(\tilde{X}+{\bf{X}})V^{-1}$ with $\tilde{x}={\vect}(\tilde{X})$. Since $A$, $B$, $C$, $D$ and $F$ were chosen arbitrary, we conclude that $\Xi\subseteq U(\tilde{X}+{\bf{X}})V^{-1}$.
\end{proof}

Let us point out that in the Krawczyk type operators for solving an interval linear system ${\bf{A}}x={\bf{b}}$, usually $\tilde{x}$ is taken as an approximate solution for the midpoint system, i.e., $A^{c}x=b^{c}$. Here, also we follow this and choose $\tilde{x}$ as an approximate solution to the midpoint system of ${\bf{Q_{p}}}x={\bf{f_{p}}}$, i.e., $Q_{p}^{c}x=f_{p}^{c}$. But by (\ref{eq.2.9}) we know that $Q_{p}^{c}= B_{p}^{\top c}\otimes A_{p}^{ c}+ D_{p}^{\top c} \otimes C_{p}^{ c}$. On the other hand, theoretically we have $A_{p}^{c}=D_{A}$, $B_{p}^{c}=D_{B}$, $C_{p}^{c}=D_{C}$ and $D_{p}^{c}=D_{D}$ and so $Q_{p}^{c}$ is a diagonal matrix. In fact using the notations introduced before, $Q_{p}^{c}=\Lambda$ and $(Q_{p}^{c})^{-1}=R$. Thus $\tilde{x}=Rf_{p}^{c}=(Diag({\vect}(S)))^{-1}{\vect}(F_{p}^{c})={\vect}(F_{p}^{c}./S)$. Of course in our approach, we deal with the large scale equation $Q_{p}^{c}x=f_{p}^{c}$ implicitly and we use $\tilde{X}=F_{p}^{c}./S$ instead of $\tilde{x}$. Algorithm 1 describes our approach for obtaining an outer estimation for the solution set of the interval generalized Sylvester matrix equation (\ref{eq.1.5}). In this algorithm we use standard $\epsilon$-inflation method to obtain ${\bf{X}}$ and $\epsilon$ is the machine precision. The obtained enclosure ${\bf{Z}}$ by Algorithm \ref{algorithm 1} has this advantage that it is in the form of ${\bf{Z}}=U{\bf{Y}}V^{-1}$ in which ${\bf{Y}}=\tilde{X}+{\bf{X}}$.

\begin{algorithm}[ht]
\caption{A modified Krawczyk method for enclosing the solution set of the interval system (\ref{eq.1.5})}
\begin{algorithmic}[1]
\Require
\Statex Matrices ${\bf{A}},{\bf{C}}\in\mathbb{IK}^{m\times m}$, ${\bf{B}},{\bf{D}}\in\mathbb{IK}^{n\times n}$ and ${\bf{F}}\in\mathbb{IK}^{m\times n}$.
\Ensure
\Statex Either an interval matrix ${\bf{Z}}$ containing the united solution set $\Xi$ of the interval generalized Sylvester matrix equation (\ref{eq.1.5}) or the report "Method can not obtain outer estimation".
\Statex
\State Use a floating point algorithm for computing $U$, $V$, $D_{A}$, $D_{B}$, $D_{C}$ and $D_{D}$ in spectral decompositions (\ref{eq.2.14}) and (\ref{eq.2.15}).
\State Compute ${\bf{A}}_{p}$, ${\bf{B}}_{p}$, ${\bf{C}}_{p}$ and ${\bf{D}}_{p}$ by (\ref{equation 1}) using interval arithmetic.
\State Put $S=[S_{1}|\ldots|S_{n}]\in\mathbb{K}^{m\times n}$ such that $S_{j}=D_{B}(j,j)*{\textrm{diag}}(D_{A})+D_{D}(j,j)*{\textrm{diag}}(D_{C})$.
\State Compute $\tilde{X}={\textrm{mid}}({\bf{F}}_{p})./S$.
\State Compute  ${\bf{M}}=({\bf{F}}_{p}-({\bf{A}}_{p}\tilde{X}){\bf{B}}_{p}-({\bf{C}}_{p}\tilde{X}){\bf{D}}_{p})./S$.
\State Put ${\bf{H}}={\bf{M}}$; ready=0; $k=0$; $kmax=15$;
\State $E=0.1 *{\textrm{rad}}({\bf{M}})*[-1,1]+{\textrm{midrad}}(0,10*\epsilon)$
\While{ ($\sim$ ready \& $k<kmax$)}
\State Put ${\bf{X}}={\bf{H}}+E$   \Comment \{$\epsilon$-inflation\}
\State Put $k=k+1$
\State Compute ${\bf{N}}$ for input ${\bf{X}}$ by (\ref{eq.2.28})
\State Put ${\bf{H}}={\bf{M}}+{\bf{N}}$
\State ready=in0({\bf{H}},{\bf{X}})  \Comment \{Checking condition (\ref{eq.2.13})\}
\EndWhile
\If {ready}
\State Output ${\bf{Z}}=U(\tilde{X}+{\bf{X}})V^{-1}$
\Else
\State Output "Method can not obtain outer estimation"
\EndIf
\end{algorithmic}
\label{algorithm 1}
\end{algorithm}

\begin{theorem}\label{theorem 2.5}
Algorithm \ref{algorithm 1} requires $O(m^{3}+n^{3})$ arithmetic operations per iteration.
\end{theorem}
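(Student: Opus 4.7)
The plan is to go step by step through the body of the while loop (lines 8--14 of Algorithm~\ref{algorithm 1}), count the number of flops spent on each operation, and check that the dominant cost is a constant number of matrix--matrix products of sizes $m\times m$, $m\times n$, or $n\times n$. Since the preprocessing (lines 1--7), which consists of one spectral decomposition of an $m\times m$ matrix, one of an $n\times n$ matrix, a constant number of similarity transforms $U^{-1}(\cdot)U$ and $V^{-1}(\cdot)V$, and the one-off computations of $S$, $\tilde X$, and $\bf{M}$, costs $O(m^3+n^3)$ and is done \emph{before} the loop, it is enough to analyse a single pass through the loop.

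In each iteration, line~9 (the $\epsilon$-inflation) and line~12 (adding $\bf M$ and $\bf N$) involve only $m\times n$ interval matrices and thus cost $O(mn)$; the inclusion test on line~13 is likewise $O(mn)$. The expensive step is line~11, where $\bf N$ is computed via (\ref{eq.2.28}). This formula requires, for each of the four terms, one product of an $m\times m$ matrix by an $m\times n$ matrix, followed by one product of the resulting $m\times n$ matrix by an $n\times n$ matrix (for example $(A_p^{\Delta} X^{\Delta})|B_p^c|$ and analogously for the other three summands). Each such triple product therefore costs $O(m^2 n+mn^2)$; the final Hadamard division by $|S|$ adds only $O(mn)$. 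Crucially, we never form, store, or multiply by the $mn\times mn$ Kronecker matrix ${\bf Q}_p$ or its approximate inverse $R$: Lemma~\ref{lemma 2.1}(i) and Lemma~\ref{lemma 2.1}(ii) allow us to replace every occurrence of $R$ and ${\bf Q}_p$ by matrix-level operations on factors of size at most $\max(m,n)\times\max(m,n)$.

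Putting the pieces together, the per-iteration cost is a constant number of triple matrix products of the above sizes plus $O(mn)$ of elementwise work, giving a bound $O(m^2 n+mn^2)$. Since $m^2 n\le \tfrac{2}{3}m^3+\tfrac{1}{3}n^3$ and $mn^2\le \tfrac{1}{3}m^3+\tfrac{2}{3}n^3$ by the AM--GM inequality applied to $m^3,m^3,n^3$ and $m^3,n^3,n^3$ respectively, this is $O(m^3+n^3)$, as claimed.

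The only subtle point, and the one that would deserve a brief explicit comment, is precisely the observation that despite the Krawczyk operator being derived on the vectorised system (\ref{eq.2.17}) of dimension $mn$, the diagonal structure of $\Lambda$ (and hence of $R$) together with the Kronecker identity from Lemma~\ref{lemma 2.1} lets every quantity appearing in lines~5 and~11 be evaluated through matrix products of the original sizes. This is the sole reason the complexity drops from the $O(m^3n^3)$ of a direct treatment of (\ref{eq.1.6}) to the advertised $O(m^3+n^3)$ per iteration.
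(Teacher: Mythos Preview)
Your proof is correct and follows the same approach as the paper: both argue that every step of Algorithm~\ref{algorithm 1} reduces to a constant number of matrix--matrix operations on matrices of sizes $m\times m$, $m\times n$, or $n\times n$, whence the $O(m^{3}+n^{3})$ bound. Your version is in fact more detailed than the paper's---you give the sharper intermediate bound $O(m^{2}n+mn^{2})$ for the loop body, justify the passage to $O(m^{3}+n^{3})$ via AM--GM, and make explicit the key point (only implicit in the paper) that the diagonal structure of $R$ together with Lemma~\ref{lemma 2.1} is what allows one to bypass the $mn\times mn$ Kronecker matrices entirely.
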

\begin{proof}
The cost of spectral decompositions in line 1 is cubic with respect to the dimension of the involved matrices and adding together yields  $O(m^{3}+n^{3})$. Other computational parts of Algorithm \ref{algorithm 1} contain vector-vector or matrix-matrix operations including multiplications or additions between $m$-dimensional vectors or matrices of dimensions $m\times m$, $n\times n$ or $m\times n$ and hence their cost is at most $O(m^{3}+n^{3})$. So gathering all computational costs yield Algorithm~ \ref{algorithm 1} requires $O(m^{3}+n^{3})$ arithmetic operations.
\end{proof}

\subsection{Theory based on block diagonalization}\label{subsection 2.3}
The introduced theory in Subsection \ref{subsection 2.2} is applicable when the condition ${\bf{H}}\subseteq {\textrm{int}}({\bf{X}})$ in Theorem \ref{theorem 2.4} is likely to take place. But if at least one of the $A^{c}$, $B^{c}$, $C^{c}$ and $D^{c}$ are not diagonalizable or if the eigenvector matrices $U$ or $V$ are ill-conditioned then the radii of various computed matrices in Algorithm \ref{algorithm 1} will become very large and so the condition ${\bf{H}}\subseteq {\textrm{int}}({\bf{X}})$ will not be satisfied. In such situation, we can utilize block diagonalization introduced by Bavely and Stewart \cite{BavSte1979}. Again we suppose that $A^{c}$ and $C^{c}$ are simultaneously block diagonalizable and also $B^{c}$ and $D^{c}$ are simultaneously block diagonalizable, i.e., there are invertible matrices $U\in{\mathbb{K}}^{m\times m}$ and $V\in{\mathbb{K}}^{n\times n}$ such that

\begin{equation}\label{eq.2.34}
A^{c}=UD_{A}U^{-1}, \hspace{.3 cm} B^{c}=VD_{B}V^{-1}, \hspace{.3 cm} C^{c}=UD_{C}U^{-1}, \hspace{.3 cm} D^{c}=VD_{D}V^{-1} ,
\end{equation}
in which $D_{A}$, $D_{B}$, $D_{C}$ and $D_{D}$ are block  diagonal with each diagonal block being triangular. Also we suppose that the size of diagonal blocks of $D_{B}$ and $D_{D}$ are the same pairwise. Authors in \cite{FroHas2012} used block diagonalization skillfully for computing verified solutions of the real-valued Sylvester matrix equation $AX+XB=C$. Here we utilize a similar approach for our problem. Although we can require either upper or lower triangular form in principle, but from a computational point of view it is an advantage that we assume that $D_{A}$ and $D_{C}$ are upper triangular and $D_{B}$ and $D_{D}$ to be lower triangular, since with this assumption $D_{B}^{\top}\otimes D_{A}+D_{D}^{\top}\otimes D_{C}$ will be also triangular. The intended block diagonalization algorithm allows to trade a better condition of $U$ for larger diagonal blocks in $D_{A}$.

Considering block diagonalizations (\ref{eq.2.34}) instead of spectral decompositions (\ref{eq.2.14}) and (\ref{eq.2.15}) yields an extension of the introduced approach in Subsection \ref{subsection 2.2}. Here based on the block diagonalizations (\ref{eq.2.34}), we must restore
\begin{equation*}
\Lambda=D_{B}^{\top}\otimes D_{A}+D_{D}^{\top}\otimes D_{C}.
\end{equation*}
It is obvious that $\Lambda$ is not diagonal any more. For describing structure of $\Lambda$ in more details let
\begin{equation*}
D_{A}={\textrm{Diag}}(D_{A}^{1},\ldots,D_{A}^{W}),  \hspace{1 cm} D_{C}={\textrm{Diag}}(D_{C}^{1},\ldots,D_{C}^{V}),
\end{equation*}
\begin{equation*}
D_{B}={\textrm{Diag}}(D_{B}^{1},\ldots,D_{B}^{L}),  \hspace{1 cm} D_{D}={\textrm{Diag}}(D_{D}^{1},\ldots,D_{D}^{L}),
\end{equation*}
where $D_{A}^{j}\in\mathbb{K}^{\omega_{j}\times\omega_{j}}$ for $j=1,\ldots,W$, and $D_{C}^{j}\in\mathbb{K}^{\nu_{j}\times\nu_{j}}$ for $j=1,\ldots,V$, are upper triangular diagonal blocks of $D_{A}$ and $D_{C}$, respectively. Also $D_{B}^{j}\in\mathbb{K}^{{\ell}_{j}\times{\ell}_{j}}$ and $D_{D}^{j}\in\mathbb{K}^{{\ell}_{j}\times{\ell}_{j}}$ for $j=1,\ldots,L$, are lower triangular diagonal blocks of $D_{B}$ and $D_{D}$, respectively. Then
{\setlength\arraycolsep{2pt}
\begin{eqnarray}\label{eq.2.35}
 \Lambda= && {\textrm{Diag}}(\Lambda_{1},\ldots,\Lambda_{L}),
\nonumber\\
&& \Lambda_{j}=(D_{B}^{j})^{\top}\otimes D_{A}+(D_{D}^{j})^{\top}\otimes D_{C}\in\mathbb{K}^{m{\ell}_{j}\times m{\ell}_{j}},   \hspace{1 cm} j=1,\ldots,L,
\end{eqnarray}}
in which $\Lambda_{j}$, $j=1,\ldots,L$, are upper triangular diagonal blocks that each of them has internal sub-block structure as follows
\begin{equation*}
\Lambda_{j}=\left(
  \begin{array}{cccc}
    (D_{B}^{j})_{11} D_{A}+(D_{D}^{j})_{11}D_{C} & (D_{B}^{j})_{21} D_{A}+(D_{D}^{j})_{21}D_{C} & \ldots & (D_{B}^{j})_{{\ell}_{j}1} D_{A}+(D_{D}^{j})_{{\ell}_{j}1}D_{C} \\
    0 & (D_{B}^{j})_{22} D_{A}+(D_{D}^{j})_{22}D_{C} & \ldots & (D_{B}^{j})_{{\ell}_{j}2} D_{A}+(D_{D}^{j})_{{\ell}_{j}2}D_{C} \\
    0 & 0 & \ddots & \vdots \\
    0 & 0 & \ldots & (D_{B}^{j})_{{\ell}_{j}{\ell}_{j}} D_{A}+(D_{D}^{j})_{{\ell}_{j}{\ell}_{j}}D_{C} \\
  \end{array}
\right).
\end{equation*}

The inverse of $\Lambda$ defined by (\ref{eq.2.35}) should be replaced by $R$ in Subsection \ref{subsection 2.2}. By this substitution, it is obvious that Theorem \ref{theorem 2.4} holds still providing we regulate the computation of ${\bf{M}}$ and ${\bf{N}}$, respectively by (\ref{eq.2.23}) and (\ref{eq.2.28}) with this fact that $\Lambda$ is not diagonal any more and now is block diagonal with upper triangular diagonal blocks. We can do this without computing $R=\Lambda^{-1}$ explicitly, using backward substitution. Note that due to the block structure of $\Lambda$, this backward substitution breaks into several parts, each part deals with $\Lambda_{j}$ from (\ref{eq.2.35}).

On the other hand, Algorithm~\ref{algorithm 1} must be modified also in four points. First in line 1, the computation of spectral decompositions must be replaced by the computation of block diagonalizations (\ref{eq.2.34}). In line 4, instead of computation of $\tilde{X}$ by pointwise divisions we perform backward substitution on $\Lambda$ with the right-hand side vector ${\vect}({\textrm{mid}}({\bf{F}}_{p}))$ that yields a vector $\tilde{x}$ which we reduce it to a matrix $\tilde{X}$ with $\tilde{x}={\vect}(\tilde{X})$ to continue. Similar modifications are needed for lines 5 and 11. The modified algorithm consumes more time than the Algorithm \ref{algorithm 1} in which we only have matrix-matrix operations, of course it is still computationally efficient. In fact, if we define $u=\max\{\ell_{1},\ldots,\ell_{L}\}$,  $l=\min\{\ell_{1},\ldots,\ell_{L}\}$ and $\lambda=u^{2}/l$ then the computational complexity of the backward substitution process in modified algorithm is at most $O(\lambda m^{2}n)$ which is less than $O(\lambda (m^{3}+n^{3}))$ and since the complexity of the remaining computations in the modified algorithm is  at most of order $O(m^{3}+n^{3})$, we conclude that the modified algorithm is still has a cost of $O(m^{3}+n^{3})$ arithmetic operations. It is to be noted that due to the dependency problem in interval arithmetic, backward substitution for line 5 of Algorithm \ref{algorithm 1} may causes an interval matrix ${\bf{M}}$ which is substantially large and so condition (\ref{eq.2.13}) may not be held any more. Unless we have lucky sign constellations in the components of $\Lambda$ and ${\bf{F}}_{p}-({\bf{A}}_{p}\tilde{X}){\bf{B}}_{p}-({\bf{C}}_{p}\tilde{X}){\bf{D}}_{p}$. To obtain more insight into this subject, see the end of Section~3.2 of \cite{FroHas2012}.

\subsection{An iterative method for enclosing the solution set}\label{subsection 2.4}
In this subsection, we present an iterative technique for obtaining outer estimations for the solution set of the interval generalized Sylvester matrix equation (\ref{eq.1.5}). The approach is based on the spectral decompositions (\ref{eq.2.14}) and (\ref{eq.2.15}). Similar to Subsection \ref{subsection 2.2}, we transform the interval system (\ref{eq.1.5}) into the following system
\begin{equation}\label{eq. 2.39}
{\bf{A}}_{p}Y{\bf{B}}_{p}+{\bf{C}}_{p}Y{\bf{D}}_{p}={\bf{F}}_{p},
\end{equation}
by pre-multiplication with $U^{-1}$ and post-multiplication with $V$. In equation (\ref{eq. 2.39}) we have ${\bf{A}}_{p}=U^{-1}{\bf{A}}U$, ${\bf{B}}_{p}=V^{-1}{\bf{B}}V$, ${\bf{C}}_{p}=U^{-1}{\bf{C}}U$, ${\bf{D}}_{p}=V^{-1}{\bf{D}}V$, ${\bf{F}}_{p}=U^{-1}{\bf{F}}V$ and $Y=U^{-1}XV$.

And suppose vectors $a\in\mathbb{K}^{m}$, $b\in\mathbb{K}^{n}$, $c\in\mathbb{K}^{m}$ and $d\in\mathbb{K}^{n}$ are such that
\begin{equation}\label{eq. 2.41}
A_{p}^{c}={\textrm{Diag}}(a),   \hspace{.6 cm}  B_{p}^{c}={\textrm{Diag}}(b),   \hspace{.6 cm}  C_{p}^{c}={\textrm{Diag}}(c),   \hspace{.6 cm}  D_{p}^{c}={\textrm{Diag}}(d).
\end{equation}

Suppose $Y$ belongs to the solution set of the interval system (\ref{eq. 2.39}) and let an initial interval matrix ${\bf{Y}}\in\mathbb{IK}^{m\times n}$ that is an enclosure for the solution set of (\ref{eq. 2.39}) be given. Using Proposition \ref{proposition 2.1}, we can write
\begin{equation}\label{eq. 2.42}
|A_{p}^{c}YB_{p}^{c}+C_{p}^{c}YD_{p}^{c}-F_{p}^{c}|\leq \Mag({\bf{A}}_{p})|Y|B_{p}^{\Delta}+  A_{p}^{\Delta}|Y B_{p}^{c}|  +\Mag({\bf{C}}_{p})|Y|D_{p}^{\Delta}+  C_{p}^{\Delta}|Y D_{p}^{c}|+ F_{p}^{\Delta}.
\end{equation}
Since $Y\in{\bf{Y}}$ so the right-hand side of inequality (\ref{eq. 2.42}) is less than $T$ defined as follows
\begin{equation*}
T:= \Mag({\bf{A}}_{p})|{\bf{Y}}|B_{p}^{\Delta}+  A_{p}^{\Delta}|{\bf{Y}} B_{p}^{c}|  +\Mag({\bf{C}}_{p})|{\bf{Y}}|D_{p}^{\Delta}+  C_{p}^{\Delta}|{\bf{Y}} D_{p}^{c}|+ F_{p}^{\Delta}.
\end{equation*}
Thus we conclude that
\begin{equation*}
|A_{p}^{c}YB_{p}^{c}+C_{p}^{c}YD_{p}^{c}-F_{p}^{c}|\leq T.
\end{equation*}
On the other hand by (\ref{eq. 2.41}) and a simple computation we obtain $A_{p}^{c}YB_{p}^{c}+C_{p}^{c}YD_{p}^{c}=Y \circ(ab^{\top}+cd^{\top})$ where $\circ$ stands for the Hadamard product. And so the last inequality is equivalent to
\begin{equation*}
|Y \circ(ab^{\top}+cd^{\top})-F_{p}^{c}|\leq T,
\end{equation*}
that yields
\begin{equation*}
Y \circ(ab^{\top}+cd^{\top})\in [F_{p}^{c}- T, F_{p}^{c}+T  ],
\end{equation*}
and hence
\begin{equation}\label{eq. 2.43}
Y\in \bigg( [F_{p}^{c}- T, F_{p}^{c}+T  ]\cdot /(ab^{\top}+cd^{\top}) \bigg) \cap {\bf{Y}}=:{\bf{Y}}'\equiv \Gamma ({\bf{A}},{\bf{B}},{\bf{C}},{\bf{D}},{\bf{F}},{\bf{Y}}).
\end{equation}
So we obtain another enclosure ${\bf{Y}}'$ for $Y$. Since this works for all $Y$ with $A_{p}YB_{p}+C_{p}YD_{p}=F_{p}$, $A_{p}\in{\bf{A}}_{p}$, $B_{p}\in{\bf{B}}_{p}$, $C_{p}\in{\bf{C}}_{p}$, $D_{p}\in{\bf{D}}_{p}$ and $F_{p}\in{\bf{F}}_{p}$, thus we conclude that ${\bf{Y}}'$ will be another enclosure for the solution set of the interval system (\ref{eq. 2.39}).

If ${\bf{Y}}'$ defined by (\ref{eq. 2.43}) is strictly contained in ${\bf{Y}}$ then we may hope to get a further improved interval enclosure for the solution set of the interval system (\ref{eq. 2.39}) by repeating the above process, that leads to the following iteration
\begin{equation}\label{eq. 2.44}
\left\{ \begin{array}{ll}
{\bf{Y}}^{(0)}:= {\bf{Y}},\\
{\bf{Y}}^{(k+1)}:= \Gamma ({\bf{A}},{\bf{B}},{\bf{C}},{\bf{D}},{\bf{F}},{\bf{Y}}^{(k)}), \hspace{.7 cm}  k=0,1,\ldots .
\end{array} \right.
\end{equation}
Iteration (\ref{eq. 2.44}) is terminated whenever the distance between two successive iterations becomes smaller than a given tolerance.

\begin{theorem}\label{atheorem 2.6}
Consider the interval generalized Sylvester matrix equation (\ref{eq.1.5}) and let an initial enclosure ${\bf{Y}}\in{\mathbb{IK}}^{m\times n}$ for the solution set of the interval system (\ref{eq. 2.39}) be given. If the interval matrix ${\bf{Z}}\in{\mathbb{IK}}^{m\times n}$ is obtained by applying the iteration (\ref{eq. 2.44}) then $\Xi\subseteq U{\bf{Z}}V^{-1}$.
\end{theorem}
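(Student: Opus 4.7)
The plan is to prove by induction on $k$ that every iterate $\mathbf{Y}^{(k)}$ encloses the solution set of the transformed system (\ref{eq. 2.39}), and then to transfer this enclosure back to the original coordinates via the map $Y\mapsto UYV^{-1}$.

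First I would perform the reduction step. Let $\Xi_p$ denote the united solution set of (\ref{eq. 2.39}), i.e., all $Y$ with $A_pYB_p+C_pYD_p=F_p$ for some $A_p\in\mathbf{A}_p$, $B_p\in\mathbf{B}_p$, $C_p\in\mathbf{C}_p$, $D_p\in\mathbf{D}_p$, $F_p\in\mathbf{F}_p$. If $X\in\Xi$ with certifying $A\in\mathbf{A}$, \ldots, $F\in\mathbf{F}$, then $Y:=U^{-1}XV$ satisfies $(U^{-1}AU)\,Y\,(V^{-1}BV)+(U^{-1}CU)\,Y\,(V^{-1}DV)=U^{-1}FV$, and by the enclosure property applied to the definitions (\ref{equation 1}) the matrices $U^{-1}AU,\ldots,U^{-1}FV$ lie in $\mathbf{A}_p,\ldots,\mathbf{F}_p$ respectively, giving $Y\in\Xi_p$. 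Hence $\Xi\subseteq U\Xi_p V^{-1}$, and it suffices to show $\Xi_p\subseteq\mathbf{Z}$.

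Next I would run the induction. The base case $\Xi_p\subseteq\mathbf{Y}^{(0)}=\mathbf{Y}$ is the hypothesis on the initial enclosure. For the step, fix $Y\in\Xi_p$ and assume $\Xi_p\subseteq\mathbf{Y}^{(k)}$. Applying Proposition~\ref{proposition 2.1} to the preconditioned system (\ref{eq. 2.39}) yields inequality (\ref{eq. 2.42}); since $Y\in\mathbf{Y}^{(k)}$, its right-hand side is dominated entrywise by the matrix $T$ defined as in the derivation of (\ref{eq. 2.43}) but with $|\mathbf{Y}^{(k)}|$ in place of $|Y|$. Using the diagonal-midpoint identities (\ref{eq. 2.41}), the left-hand side equals $|Y\circ(ab^{\top}+cd^{\top})-F_p^{c}|$, so we obtain $Y\circ(ab^{\top}+cd^{\top})\in[F_p^{c}-T,\,F_p^{c}+T]$. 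Dividing entrywise by $ab^{\top}+cd^{\top}$ and intersecting with $\mathbf{Y}^{(k)}$ (which contains $Y$ by induction) places $Y$ in $\Gamma(\mathbf{A},\mathbf{B},\mathbf{C},\mathbf{D},\mathbf{F},\mathbf{Y}^{(k)})=\mathbf{Y}^{(k+1)}$, closing the induction.

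Consequently $\Xi_p\subseteq\mathbf{Y}^{(k)}$ for all $k$ and therefore $\Xi_p\subseteq\mathbf{Z}$; combined with the reduction step this gives $\Xi\subseteq U\mathbf{Z}V^{-1}$. The main delicate point is that the Hadamard division appearing in (\ref{eq. 2.43}) must be well-defined, i.e., $ab^{\top}+cd^{\top}$ must have no zero entry; under the standing simultaneous-diagonalization assumptions and nonsingularity of the midpoint problem this is automatic, but if it fails the iteration itself is not meaningful. No other subtlety is expected, since every step either invokes Proposition~\ref{proposition 2.1} or exploits the enclosure/inclusion-isotonicity property of the underlying interval arithmetic.
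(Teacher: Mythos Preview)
Your proposal is correct and follows essentially the same route as the paper: the paper's proof simply cites ``the process leading to construction of the iteration (\ref{eq. 2.44})'' for the fact that $\mathbf{Z}$ encloses the solution set of (\ref{eq. 2.39}) and then transfers back via $U$ and $V^{-1}$, whereas you spell out that process as an explicit induction on $k$ using Proposition~\ref{proposition 2.1}. Your version is more detailed (and your remark on the well-definedness of the Hadamard division is a sensible caveat the paper leaves implicit), but the underlying argument is the same.
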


\begin{proof}
Due to the process leading to construction of the iteration (\ref{eq. 2.44}), we know that ${\bf{Z}}$ will be an enclosure for the solution set of the interval system (\ref{eq. 2.39}). This means that for every $A\in{\bf{A}}$, $B\in{\bf{B}}$, $C\in{\bf{C}}$, $D\in{\bf{D}}$ and $F\in{\bf{F}}$, the solution $\hat{Y}$ of the generalized Sylvester matrix equation $A_{p}YB_{p}+C_{p}YD_{p}=F_{p}$ satisfies $\hat{Y}\in{\bf{Z}}$, wherein $A_{p}=U^{-1}AU$, $B_{p}=V^{-1}BV$, $C_{p}=U^{-1}CU$, $D_{p}=V^{-1}DV$, $F_{p}=U^{-1}FV$ and $Y=U^{-1}XV$. This implies that the solution $\hat{X}$ of the generalized Sylvester matrix equation $AXB+CXD=F$ satisfies $\hat{X}\in U{\bf{Z}}V^{-1}$. Since $A$, $B$, $C$, $D$ and $F$ were chosen arbitrary, we conclude that $\Xi\subseteq U{\bf{Z}}V^{-1}$.
\end{proof}

It is worth noting that in another point of view, the proposed iterative approach can be applied for improving the (wide) enclosures that are obtained by the other methods for the interval system (\ref{eq. 2.39}) and so obtaining a tight enclosure for the interval generalized Sylvester matrix equation (\ref{eq.1.5}).

\begin{theorem}\label{atheorem 2.7}
The proposed iterative method in Theorem \ref{atheorem 2.6} requires $\mathcal{O}(m^{3}+n^{3})$ arithmetic operations per iteration.
\end{theorem}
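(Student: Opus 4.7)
The plan is to itemize each step performed inside one execution of the operator $\Gamma({\bf{A}},{\bf{B}},{\bf{C}},{\bf{D}},{\bf{F}},{\bf{Y}}^{(k)})$ from (\ref{eq. 2.43}) and bound its arithmetic cost, showing the dominant contribution is at most $O(m^3+n^3)$.

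First, I would separate one-time setup from per-iteration work. The spectral decompositions (\ref{eq.2.14}), (\ref{eq.2.15}), the conjugations ${\bf{A}}_p=U^{-1}{\bf{A}}U$, ${\bf{B}}_p=V^{-1}{\bf{B}}V$, ${\bf{C}}_p=U^{-1}{\bf{C}}U$, ${\bf{D}}_p=V^{-1}{\bf{D}}V$, ${\bf{F}}_p=U^{-1}{\bf{F}}V$, the vectors $a,b,c,d$ from (\ref{eq. 2.41}), the magnitudes $\Mag({\bf{A}}_p),\Mag({\bf{C}}_p)$, the radii $A_p^\Delta,B_p^\Delta,C_p^\Delta,D_p^\Delta,F_p^\Delta$, and the rank-two outer-product matrix $ab^\top+cd^\top$ are all computed \emph{once} before the loop begins. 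The diagonalizations cost $O(m^3)+O(n^3)$, the triple products for ${\bf{A}}_p,{\bf{C}}_p$ cost $O(m^3)$ each, and those for ${\bf{B}}_p,{\bf{D}}_p$ cost $O(n^3)$ each; the outer product costs $O(mn)$; the magnitudes and radii are componentwise and thus $O(m^2)+O(n^2)$. So setup is $O(m^3+n^3)$.

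Second, I would count the work inside one iteration. Forming $T$ requires the four triple products
\[
\Mag({\bf{A}}_p)|{\bf{Y}}|B_p^{\Delta},\quad A_p^{\Delta}|{\bf{Y}} B_p^{c}|,\quad \Mag({\bf{C}}_p)|{\bf{Y}}|D_p^{\Delta},\quad C_p^{\Delta}|{\bf{Y}} D_p^{c}|,
\]
each of the shape (real $m\times m$)$\cdot$(real $m\times n$)$\cdot$(real $n\times n$). Evaluating each by first doing the $m\times m$ times $m\times n$ product ($O(m^2n)$ work) and then the $m\times n$ times $n\times n$ product ($O(mn^2)$ work) gives $O(m^2n+mn^2)\le O(m^3+n^3)$ per triple product. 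Taking componentwise $|\cdot|$ of ${\bf{Y}}$ and ${\bf{Y}} B_p^c$ is $O(mn)$, and summing the four terms plus $F_p^\Delta$ is $O(mn)$. Finally, forming the interval matrix $[F_p^c-T,F_p^c+T]$, dividing it componentwise by the stored $ab^\top+cd^\top$, and intersecting with ${\bf{Y}}^{(k)}$ are all Hadamard operations of cost $O(mn)$. Summing these contributions, the per-iteration cost is dominated by the four triple products and equals $O(m^3+n^3)$, as claimed.

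The only point that requires any care is the order in which the triple products are evaluated; multiplying in the wrong order could inflate the cost (for instance, first forming an $mn\times mn$ Kronecker-style object would be $O(m^2n^2)$ or worse). Apart from this bookkeeping, the argument is a direct operation count and involves no additional difficulty.
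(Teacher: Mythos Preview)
Your proof is correct and follows essentially the same approach as the paper's own proof: a direct arithmetic operation count showing that the spectral decompositions and the matrix--matrix products involving $m\times m$, $n\times n$, and $m\times n$ matrices each cost at most $O(m^3+n^3)$. Your version is more detailed---you separate one-time setup from per-iteration work and explicitly note the evaluation order of the triple products---whereas the paper's proof is a terse two-line summary, but the underlying argument is the same.
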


\begin{proof}
The cost for the spectral decompositions (\ref{eq.2.14}) and (\ref{eq.2.15}) is cubic in the dimension of the involved matrices which adding together yields $\mathcal{O}(m^{3}+n^{3})$. Other operations between $m$-by-$m$, $n$-by-$n$ or $m$-by-$n$ matrices (multiplications, additions or point-wise divisions) cost $\mathcal{O}(m^{3}+n^{3})$. And since the costs of the other computational parts are negligible so the theorem is proved.
\end{proof}

\section{Numerical Tests}\label{section 3}
In this section, we present some numerical experiments to support the theoretical results and to illustrate the efficiency of the proposed methods in Section~\ref{section 2}.

In the following, we give some examples to compare the results obtained by the modified Krawczyk method proposed in Subsection \ref{subsection 2.2} (MKW), the iterative method proposed in Subsection \ref{subsection 2.4} (ITR) and \verb"VERMATREQN.m" code of \verb"VERSOFT" \cite{versoft} (VER) in terms of the executing time and quality of the obtained enclosures.\\

In Tables \ref{table 1}, \ref{table 2} and \ref{table 3} below, we compare the obtained enclosures by relative sums of radii with respect to the enclosure obtained by the modified Krawczyk method. That is, for an obtained enclosure matrix ${\bf{Y}}$ and the modified Krawczyk enclosure ${\bf{Z}}$, we display
\begin{equation*}
{\textrm{Ratio}}=\frac{\sum_{i,j}{\textrm{rad}}({\bf{Y}}_{ij})} {\sum_{i,j}{\textrm{rad}}({\bf{Z}}_{ij})}.
\end{equation*}
Also in the following figures, we display the log scale of the average radius meanR of the obtained enclosure ${\bf{X}}\in{\mathbb{IK}}^{m\times n}$ by each of the methods MKW, ITR and VER defined as
\begin{equation*}
{\textrm{meanR}}=\frac{\sum_{i,j}{\textrm{rad}}({\bf{X}}_{ij})}{mn}.
\end{equation*}
In the following tables all times are in seconds and the notation OM means that \verb"VERSOFT" failed because of ''out of memory''. Also \verb"VERSOFT" returns "NaN" when fails in a problem.

\begin{example}\label{example 3.1}
 Consider the interval Kalman-Yakubovich-conjugate matrix equation
\begin{equation}\label{eq.3.1}
{\bf{A}}X{\bf{B}}+X={\bf{C}},
\end{equation}
in which ${\bf{A}}$, ${\bf{B}}$ and ${\bf{C}}$ are obtained randomly by the following Matlab's functions
{\setlength\arraycolsep{2pt}
\begin{eqnarray*}
&& \verb"A1=4*rand(m,m)-3*ones(m,m);A2=A1+alpha*rand(m,m);A=infsup(A1,A2);"
\nonumber\\
&& \verb"B1=3*rand(m,m)-2*ones(m,m);B2=B1+alpha*rand(m,m);B=infsup(B1,B2);"\nonumber\\
&& \verb"C1=ones(m,m);C2=C1+alpha*rand(m,m);C=infsup(C1,C2);"
\end{eqnarray*}}
with \verb"alpha"=$10^{-6}$. Numerical results are reported in Table \ref{table 1} for various dimensions $m$. Using the methods MKW, ITR and VER for enclosing the solution set of (\ref{eq.3.1}), $\log$ scales of the average radius meanR (computational time) for each method is plotted against the dimension $m$ in the right side (left side) of Figure \ref{figure 1}.\\

\begin{table}[t]
\caption{Results for Example \ref{example 3.1}} \label{table 1}
\begin{center}
\begin{tabular}{llllllll}
\hline
$m$ &  &Times    & &  &  &Ratios   & \\
\cline{2-4}\cline{6-8}
 & VER & MKW & ITR  &  & MKW & ITR & VER \\
\hline
10 & 0.0227 &  0.0242 &0.0372   &  &1  &1.0000 &4.2031        \\
20 & 0.1813 &  0.0218 &0.0383   &  &1  &0.9999 &1.4935     \\
30 & 1.3430 &  0.0237 &0.0472   &  &1  &0.9999 &7.7188     \\
40 & 6.0125 &  0.0310 &0.0703   &  &1  &0.9999 &13.598     \\
50 &19.289  &  0.0390 &0.0867   &  &1  &0.9995 &0.9419     \\
60 & 55.364 &  0.0550 &0.1198   &  &1  &0.9998 &1.9177        \\
70 &111.83  &0.0724   &0.1173   &  &1  &0.9997 &11.965      \\
80 &292.53  &0.0762   &0.1429   &  &1  &0.9993 &1.9631     \\
90 &1117.0  &0.0969   &0.1858   &  &1  &0.9996 &NAN  \\
100&93265  & 0.1082   &0.1948   &  &1  &0.9994 &NAN  \\
120&OM      &0.1576   &0.2988   &  &1  &0.9994 &{\bf{--}}   \\
130&OM      &0.1958   &0.3271   &  &1  &1.0000 &{\bf{--}}   \\
140&OM      &0.2187   &0.3902   &  &1  &0.9992 &{\bf{--}}   \\
150&OM      &0.2958   &0.5080   &  &1  &0.9992 &{\bf{--}}   \\
160&OM      &0.3684   &0.6415   &  &1  &0.9993 &{\bf{--}}   \\
170&OM      &0.4321   &0.7387   &  &1  &1.0003 &{\bf{--}}   \\
180&OM      &0.4870   &0.8286   &  &1  &1.0044 &{\bf{--}}   \\
190&OM      &0.9364   &0.8746   &  &1  &1.0041 &{\bf{--}}   \\
200&OM      &0.5652   &1.0037   &  &1  &1.0147 &{\bf{--}}   \\
\hline
\end{tabular}
\end{center}
\end{table}

From the reported values in Table \ref{table 1}, we see that unless for very small dimensions the proposed methods in this paper are very much faster than \verb"VERSOFT" that confirms this fact that the  new methods have just a cost of $O(m^{3})$ arithmetic operations while \verb"VERSOFT" involves $O(m^{6})$ operations. Also from the displayed values for relative sums of radii, we find that MKW and ITR methods give tighter enclosures than those obtained by VER method for almost all dimensions $m$. And ITR method gives slightly narrower enclosures than the MKW method for almost dimensions $m$ but MKW method performs slightly faster than ITR method. Figure \ref{figure 1} shows that our approaches give smaller average radii than those by VER method, on the average, when apply for solving different problems.
\end{example}

\begin{example}\label{example 3.2}
Consider the interval Sylvester matrix equation
\begin{equation}\label{eq.3.2}
{\bf{A}}X+X{\bf{B}}={\bf{C}},
\end{equation}
in which ${\bf{A}}$, ${\bf{B}}$ and ${\bf{C}}$ are obtained similarly to Example \ref{example 3.1}. You can see the obtained results by executing the methods MKW, ITR and VER for enclosing the solution set of equation (\ref{eq.3.2}) in Table \ref{table 2}. Also Figure \ref{figure 2} shows the computational time and average radius meanR obtained by executing of each method versus dimensions $m$ based on $\log$ scale of $y$-axis, respectively in left side and right side.
\begin{figure}[t]
\includegraphics[height=6cm,width=13cm]{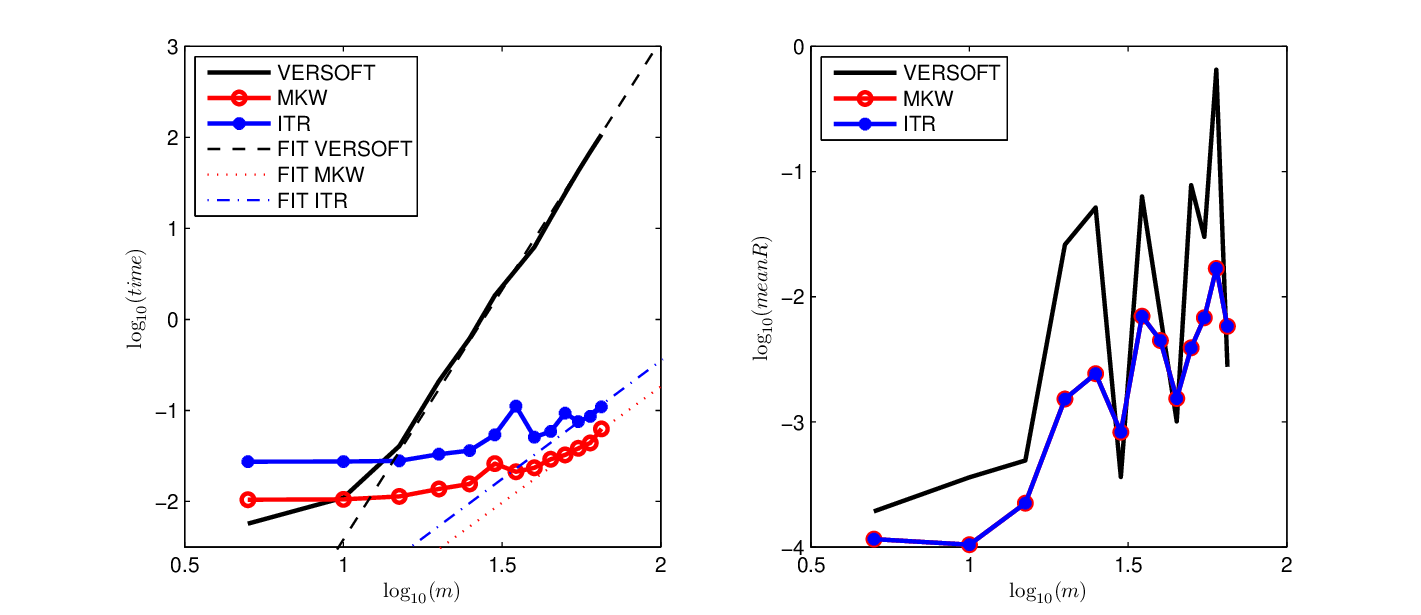}\\
\caption{$\log$ scale of the average radius (computing time) versus dimension $m$ at right (left) for Example \ref{example 3.1}.}
\label{figure 1}
\end{figure}
As one can see, again unless for very small dimensions the proposed methods in this paper are very faster than \verb"VERSOFT". The reported numbers in Table \ref{table 2} shows that VER method gives tighter enclosures than those obtained by MKW and ITR methods for almost dimensions as is shown also in Figure \ref{figure 2}. MKW method is faster than the ITR method whereas ITR method gives slightly narrower enclosures than MKW method. \verb"VERSOFT" fails from the 120th dimension onwards because of memory over.
\begin{table}[t]
\caption{Results for Example \ref{example 3.2}} \label{table 2}
\begin{center}
\begin{tabular}{llllllll}
\hline
$m$ &  &Times    & &  &  &Ratios   & \\
\cline{2-4}\cline{6-8}
 & VER & MKW & ITR  &  & MKW & ITR & VER \\
\hline
10 &  0.0191 &0.0141 &0.0374   &  &1  &1.0000 &  0.2088         \\
20 &  0.1715 &0.0172 &0.0403   &  &1  &1.0000 &  0.2023      \\
30 &  1.2069 &0.0200 &0.0462   &  &1  &0.9999 &  3.7486      \\
40 &  5.1551 &0.0296 &0.0623   &  &1  &0.9999 &  0.1956      \\
50 &  17.696 &0.0416 &0.0844   &  &1  &0.9992 &  20.052     \\
60 &  47.434 &0.0694 &0.1020   &  &1  &0.9996 &  0.6343         \\
70 &109.66   &0.1149 &0.1421   &  &1  &0.9997 &  0.0715      \\
80 &297.88   &0.0860 &0.1563   &  &1  &0.9996 &  0.2151     \\
90 &1.170.8  &0.1859 &0.1849   &  &1  &0.9996 &0.2134     \\
100&7920.3   &0.1191 &0.2139   &  &1  &0.9996 &0.1922     \\
110& 23048   &0.1924 &0.3297   &  &1  &0.9992 & 0.3361     \\
120&OM       &0.1602 &0.3329   &  &1  &0.9994 &{\bf{--}}   \\
130&OM       &0.2487 &0.3642   &  &1  &0.9992 &{\bf{--}}   \\
140&OM       &0.2422 &0.4145   &  &1  &0.9992 &{\bf{--}}   \\
150&OM       &0.2948 &0.5254   &  &1  &0.9992 &{\bf{--}}   \\
160&OM       &0.3520 &0.5642   &  &1  &0.9989 &{\bf{--}}   \\
170&OM       &0.4136 &0.6811   &  &1  &0.9990 &{\bf{--}}   \\
180&OM       &0.4858 &0.7605   &  &1  &0.9990 &{\bf{--}}   \\
190&OM       &0.5623 &0.9396   &  &1  &0.9993 &{\bf{--}}   \\
200&OM       &0.6099 &1.0008   &  &1  &1.0002 &{\bf{--}}   \\
\hline
\end{tabular}
\end{center}
\end{table}

\begin{figure}[t]
\includegraphics[height=6cm,width=13cm]{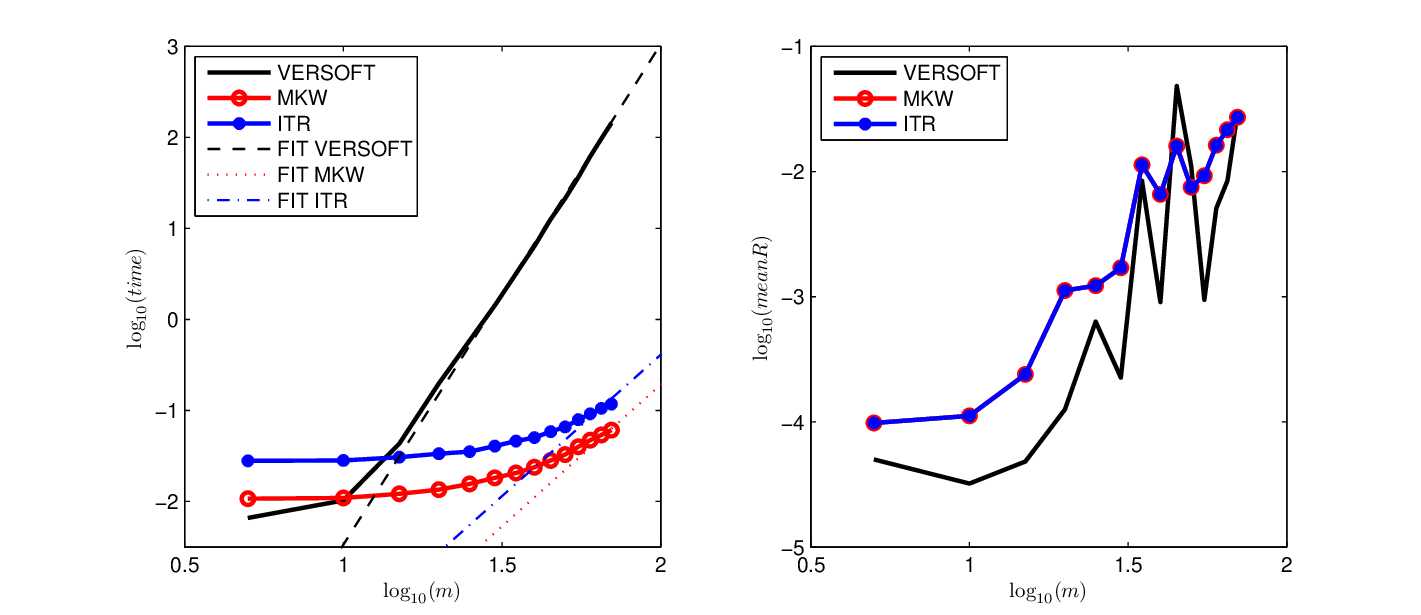}\\
\caption{$\log$ scale of the average radius (computing time) versus dimension $m$ at right (left) for Example \ref{example 3.2}.}
\label{figure 2}
\end{figure}
\end{example}

\begin{example}\label{example 3.3}
In this example, we consider the interval generalized Sylvester matrix equation
\begin{equation}\label{eq.3.3}
{\bf{A}}X{\bf{B}}+{\bf{C}}X{\bf{D}}={\bf{F}},
\end{equation}
in which ${\bf{A}}$, ${\bf{B}}$, ${\bf{C}}$, ${\bf{D}}$ and ${\bf{F}}$ are made by function \verb"gallery" of Matlab as follow
\begin{eqnarray*}
&& \verb"A1= gallery('parter',m)-ones(m,m);A2=A1+alpha* gallery('lehmer',m);"
\nonumber\\
&&\verb"A=infsup(A1,A2);C=A+midrad(0,alpha)"
\nonumber\\
&& \verb"B1= gallery('parter',m)-ones(m,m);B2=B1+alpha* gallery('lehmer',m);"
\nonumber\\
&&\verb"B=infsup(B1,B2);D=B+midrad(0,alpha)"
\nonumber\\
&& \verb"F1=gallery('lehmer',m);F2=F1+alpha*F1;F=infsup(F1,F2);"
\end{eqnarray*}
with \verb"alpha"=$10^{-6}$. The obtained results by executing the methods MKW, ITR and VER for finding outer estimation for the solution set of equation (\ref{eq.3.3}) are shown in Table \ref{table 3} for various dimensions $m$.

From the presented results for execution times in Table \ref{table 3}, we see that the proposed methods in this paper are substantially faster than \verb"VERSOFT". And the reported numbers for relative sums of radii show that \verb"VERSOFT" gives tighter enclosures than MKW and ITR methods. ITR method yields tighter enclosures than the MKW method, on the average. \verb"VERSOFT" fails from the 120th dimension onwards because of memory over.
\begin{table}[t]
\caption{Results for Example \ref{example 3.3}} \label{table 3}
\begin{center}
\begin{tabular}{llllllll}
\hline
$m$ &  &Times    & &  &  &Ratios   & \\
\cline{2-4}\cline{6-8}
 & VER & MKW & ITR  &  & MKW & ITR & VER \\
\hline
10 &  0.0768 &0.0169 &0.0356   &  &1  &1.0000 & 0.1370           \\
20 &  0.1924 &0.0178 &0.0397   &  &1  &1.0000 & 0.0904        \\
30 &  1.1841 &0.0254 &0.0479   &  &1  &0.9999 & 0.0702        \\
40 &  5.0135 &0.0341 &0.0655   &  &1  &0.9999 & 0.0578        \\
50 & 16.405  &0.0760 &0.0816   &  &1  &0.9998 & 0.0491       \\
60 & 47.203  &0.0734 &0.1040   &  &1  &0.9998 & 0.0427           \\
70 &109.30   &0.1610 &0.1396   &  &1  &0.9997 & 0.0378        \\
80 &282.76   &0.2272 &0.1547   &  &1  &0.9996 & 0.0339       \\
90 &  1459.7 &0.1134 &0.1914   &  &1  &0.9995 &  0.0307    \\
100& 7068.3  &0.3256 &0.2301   &  &1  &0.9994 &  0.0281    \\
120&OM       &0.2634 &0.3146   &  &1  &0.9993 &{\bf{--}}   \\
130&OM       &0.2327 &0.3754   &  &1  &0.9994 &{\bf{--}}   \\
140&OM       &0.2920 &0.4466   &  &1  &0.9998 &{\bf{--}}   \\
150&OM       &0.4523 &0.5480   &  &1  &1.0007 &{\bf{--}}   \\
160&OM       &0.4221 &0.6826   &  &1  &0.9988 &{\bf{--}}   \\
170&OM       &0.5117 &0.7873   &  &1  &0.9996 &{\bf{--}}   \\
180&OM       &0.5854 &0.8809   &  &1  &1.0013 &{\bf{--}}   \\
190&OM       &0.6712 &0.9977   &  &1  &1.0048 &{\bf{--}}   \\
200&OM       &0.8610 &1.1579   &  &1  &1.0112 &{\bf{--}}   \\
\hline
\end{tabular}
\end{center}
\end{table}

\end{example}

\section{Conclusion}\label{section 4}
This paper was addressed to the united solution set to the interval generalized Sylvester matrix equation (\ref{eq.1.5}). We gave necessary conditions characterizing the solution set, and we also gave a sufficient condition under which this solution set is bounded. We proposed a modified Krawczyk operator on the preconditioned system to compute outer estimations for the solution set such that keeps the computational complexity down to cubic. We then presented an iterative method on the same preconditioned system for enclosing the solution set. The proposed methods can be applied for many other interval systems that are special cases of (\ref{eq.1.5}). Numerical experiments show the effectiveness of the new approaches on the execution times and also on quality of the computed enclosures.

\subsubsection*{Acknowledgments.} 

Marzieh Dehghani-Madiseh would like to thank her supervisor prof.\ Mehdi Dehghani for his support and encouragement during this project.

\end{document}